\newtheorem{theorem}{Theorem}[section]
\newtheorem{lemma}[theorem]{Lemma}
\newtheorem{remark}[theorem]{Remark}
\newenvironment{proof}[1][Proof]{\textbf{#1.} }
{\ \rule{0.75em}{0.75em}\smallskip}
\numberwithin{equation}{section}
\newcommand{\vect}[1]{\boldsymbol{#1}} 
\begin{document}

\title{A Mixed Discontinuous Galerkin Method for Linear Elasticity\\
with Strongly Imposed Symmetry\thanks{The work of Fei Wang is 
partially supported by the National Natural Science Foundation of
China (Grant No.\ 11771350).  The work of Shuonan Wu is partially
supported by the startup grant from Peking University. The work of the
Jinchao Xu is partially supported by US Department of Energy Grant
DE-SC0014400 and National Science Foundation grant DMS-1819157.}}

\author{Fei Wang\footnote{feiwang.xjtu@xjtu.edu.cn, School of
Mathematics and Statistics \& State Key Laboratory of Multiphase Flow
in Power Engineering, Xi'an Jiaotong University, Xi'an, Shaanxi
710049, China} 
\quad 
Shuonan Wu\footnote{snwu@math.pku.edu.cn, School of
Mathematical Sciences, Peking University, Beijing, 100871, China} 
\quad 
Jinchao Xu\footnote{xu@math.psu.edu, Department of Mathematics,
Pennsylvania State University, University Park, PA, 16802, USA}
}
\date{}
\maketitle 

\begin{abstract}
In this paper, we study a mixed discontinuous Galerkin (MDG) method
to solve linear elasticity problem with arbitrary order discontinuous
finite element spaces in $d$-dimension ($d=2,3$).  This method uses
polynomials of degree $k+1$ for the stress and of degree $k$ for the
displacement ($k\geq 0$).  The mixed DG scheme is proved to be
well-posed under proper norms. Specifically, we prove that, for any $k
\geq 0$, the $H({\rm div})$-like error estimate for the stress and
$L^2$ error estimate for the displacement are optimal. We further
establish the optimal $L^2$ error estimate for the stress provided
that the $\mathcal{P}_{k+2}-\mathcal{P}_{k+1}^{-1}$ Stokes pair is
stable and $k \geq d$. We also provide numerical results of MDG
showing that the orders of convergence are actually sharp.
\end{abstract}

{\bf Keywords.} Mixed DG method, linear elasticity, well-posedness, a
priori error analysis

{\bf Mathematics Subject Classification.} 65N30, 65M60 


\section{Introduction}
In this paper, we present a mixed discontinuous Galerkin (MDG) method
for the following linear elasticity problem:
\begin{equation} \label{equ:elasticity}
\left\{
\begin{aligned}
\mathcal{A} \vect{\sigma} - \vect{\varepsilon}(u) &= 0 &
\text{in~}\Omega, \\
{\rm div} \vect{\sigma}&= f & \text{in~} \Omega, \\
u &= 0 & \text{on~} \partial\Omega, 
\end{aligned}
\right.
\end{equation}
where $u: \Omega \mapsto \mathbb{R}^d$ and $\vect{\sigma}: \Omega
\mapsto \mathbb{S}$, denote displacement and stress, respectively.
Here, $\mathbb{S}$ represents the space of real symmetric matrices of
order $d \times d$. The tensor $\mathcal{A}: \mathbb{S} \mapsto
\mathbb{S}$ is assumed to be bounded and symmetric positive definite,
and the linearized strain tensor is denoted by $\vect{\varepsilon}(u)
= (\nabla u + (\nabla u)^t)/2$. 

For the mixed methods for linear elasticity problem
\eqref{equ:elasticity}, it is very challenging to develop the stable
mixed finite element methods because the stress tensor needs to be
symmetric. One approach to circumvent this difficulty is to introduce
the antisymmetric part of $\nabla u$ as a new variable, and hence, to 
enforce stress symmetry weakly \cite{amara1979equilibrium,
arnold2007mixed, boffi2009reduced, cockburn2010new, farhloul1997dual,
qiu2009mixed, gopalakrishnan2012second}.  Another approach is to use
the composite element for the stress \cite{johnson1978some,
arnold1984family}.   The first stable non-composite conforming mixed
finite element method for plane elasticity was proposed by Arnold and
Winther in 2002 \cite{arnold2002mixed}, and analogs of the results
in the 3D case were reported in \cite{adams2005mixed, arnold2008finite}.
In this class of elements, the displacement is discretized by
discontinuous piecewise $\mathcal{P}_k^{-1}$ ($k\geq 1$) polynomial,
while the stress is discretized by the conforming $\mathcal{P}_{k+2}$
tensors whose divergence is $\mathcal{P}_k$ vector on each triangle.  
In recent years, Hu and Zhang \cite{hu2014family, hu2015family} and Hu
\cite{hu2015finite} proposed a family of conforming mixed elements for
$\mathbb{R}^d$ that apply the $\mathcal{P}_{k+1}-\mathcal{P}_k$ pair
for the stress and displacement when $k \geq d$. These elements also
admit a unified theory and a relatively easy implementation. The lower
order conforming approximations of stress were also considered in
\cite{hu2016finite}, and a simpler stress element with jump
stabilization term for the displacement \cite{chen2017stabilized}. 

Because of the lack of suitable conforming mixed elasticity elements,
several authors have resorted to the nonconforming elements 
\cite{arnold2003nonconforming,arnold2014nonconforming,
gopalakrishnan2011symmetric}, where the optimal convergence order for
the displacement can be proved under the full elliptic regularity
assumption but the convergence order of $L^2$ error for stress is
still suboptimal. To improve the convergence order for stress, an
interior penalty mixed finite element method using Crouzeix-Raviart
nonconforming linear element to approximate each component of the
symmetric stress was studied in \cite{cai2005mixed}. In
\cite{wu2017interior}, Wu, Gong, and Xu proposed two classes of interior
penalty mixed finite elements for linear elasticity of arbitrary order
in arbitrary dimension, where the stability is guaranteed by
introducing the  nonconforming face-bubble spaces based on the local
decomposition of discrete symmetric tensors.  

Discontinuous Galerkin (DG) methods have been applied to solve various
differential equations due to their flexibility in constructing
feasible local shape function spaces and the advantage to capture
non-smooth or oscillatory solutions effectively.  The DG methods are
attracting the interest of many applied mathematicians and engineers
because they discretize the equations in an element-by-element
fashion, and glue each element through numerical traces, which can
give rise to locally conservative methods. In
\cite{arnold2002unified}, Arnold, Brezzi, Cockburn, and Marini proposed
a unified framework for the devising and analysis of most DG methods
for second-order elliptic equations.  The LDG method, which is
introduced in \cite{cockburn1998local}, is one of several
discontinuous Galerkin methods which are being vigorously studied
\cite{castillo2000priori, arnold2002unified,
cockburn2003discontinuous, cockburn2009unified}. As proposed in
\cite[Equ. (2.4)]{castillo2000priori}, the numerical traces for
second-order elliptic equations have the general expressions as 
$$ 
\begin{aligned}
\widehat{\vect{p}} &= \{\vect{p}\} - C_{11}\llbracket u \rrbracket
-\vect{C_{12}}[\vect{p}], \\
\widehat{u} &= \{u\} + \vect{C_{12}}\cdot \llbracket u \rrbracket -
C_{22}[\vect{p}],  
\end{aligned}
$$ 
where $u$ and $\vect{p}$ are the approximations of primal variable and
flux, respectively. In most literature, the parameter $C_{22}$ is
taken as $0$ or $\mathcal{O}(h)$ so that the resulting scheme is of
the category of primal DG method. When taking $C_{22}$ as
$\mathcal{O}(h^{-1})$, the penalty term on the jump of $\vect{p}$
leads to a mixed DG scheme \cite{hong2018unified}.

For linear elasticity problem, a primal LDG method was studied in
\cite{chen2010local}, where the discontinuous
$\mathcal{P}_{k}^{-1}-\mathcal{P}_{k+1}^{-1}$ pairs were used to
approximate the stress and displacement for $k\geq 0$.  In the weak
formulation, two penalty terms for stress and displacement are
adopted, however, the error analysis was only given for the case when
the penalty term of the stress vanishes, i.e. $C_{22}=0$.

In this paper, we study the mixed LDG method for solving linear
elasticity by discontinuous
$\mathcal{P}_{k+1}^{-1}-\mathcal{P}_{k}^{-1}$ finite element pairs for
the stress and displacement with $k\geq 0$ for any spatial dimension
in a unified fashion. Our contributions are twofold. First, by
introducing a mesh-dependent norm for the stress, we give a prior
error analysis, which shows that optimal $L^2$-error estimate for
displacement and optimal $H_h({\rm div})$ error estimate for stress.
Second, when the $\mathcal{P}_{k+2}-\mathcal{P}_{k+1}^{-1}$ Stokes
pair is stable and $k\geq d$, we prove the optimal $L^2$ error
estimate for the stress by the BDM projection \cite{brezzi1985two} and
a symmetrization technique. 

The rest of the paper is organized as follows. In Section
\ref{sec:mixed-DG}, we derive the mixed DG scheme to solve the linear
elasticity problem. Then based on Brezzi theory, we prove the
well-posedness of the scheme in Section \ref{sec:well-posedness}, and
the optimal convergence rates are obtained for both stress and
displacement variables in Section \ref{sec:error-estimate}.  In
addition, the optimal $L^2$ error estimate for the stress is shown in
Section \ref{sec:L2error}.  In Section \ref{sec:numerical}, numerical
tests are given for solving the linear elasticity problems by the
mixed LDG methods, and the numerical results verify the theoretical
error analysis. Finally, we give several concluding remarks in the
last section.

\section{Mixed DG method for linear elasticity problem}
\label{sec:mixed-DG}
In this section, we study a mixed discontinuous Galerkin method for
the linear elasticity problem \eqref{equ:elasticity},
whose weak formulation reads: Find $(\vect{\sigma}, u)\in
\vect{\Sigma} \times V$ such that
\begin{equation} \label{equ:weak-formulation}
\left\{
\begin{aligned}
(\mathcal{A} \vect{\sigma}, \vect{\tau})_{\Omega} + (u,{\rm div}
\vect{\tau})_{\Omega} &= 0 & \forall \vect{\tau} \in \vect{\Sigma}, \\
({\rm div} \vect{\sigma}, v)_{\Omega}  &= (f,v)_{\Omega} & \forall
v\in V. \\
\end{aligned}
\right.
\end{equation}
Here, $V = L^2(\Omega;\mathbb{R}^d)$ denotes the space of
vector-valued functions which are square-integrable with the $L^2$
norm, and $\vect{\Sigma} = H({\rm div},\Omega;\mathbb{S})$ consists of
square-integrable symmetric matrix fields with square-integrable
divergence, and the corresponding norm is defined by 
$$
\|\vect{\tau}\|_{{\rm div}, \Omega}^2 := \|\vect{\tau}\|_{0, \Omega}^2
+ \|{\rm div} \vect{\tau}\|_{0, \Omega}^2 \qquad \forall \vect{\tau}
\in H({\rm div}, \Omega; \mathbb{S}).
$$
For the symmetric tensor space $\mathbb{S}$, we define the inner
products by $\vect{\sigma}: \vect{\tau} = \sum_{i,j=1}^d
\sigma_{ij}\tau_{ij}$ for any $\vect{\sigma}, \vect{\tau} \in
\mathbb{S}$. Further, we define the {\em symmetric tensor product}
$\odot$ as 
\begin{equation} \label{equ:tensor-product}
u \odot v := \frac{1}{2}(u\otimes v + v \otimes u) \in \mathbb{S}
\qquad \forall u, v\in \mathbb{R}^d,
\end{equation}
where $u\otimes v$ is a tensor with $u_iv_j$ as its $(i,j)$-th
entry. 

\subsection{DG notation}
We introduce some notation before presenting the mixed DG scheme.
Given a bounded domain $D\subset \mathbb{R}^d$ and a positive integer
$m$, $H^m(D)$ is the Sobolev space with the corresponding usual norm
and semi-norm, which are denoted respectively by $\|\cdot\|_{m,D}$ and
$|\cdot|_{m,D}$. We abbreviate them by $\|\cdot\|_{m}$ and
$|\cdot|_{m}$, respectively, when $D$ is chosen as $\Omega$.  The
$L^2$-inner product on $D$ and $\partial D$ are denoted by $(\cdot,
\cdot)_{D}$ and $\langle\cdot, \cdot\rangle_{\partial D}$,
respectively.  $\|\cdot\|_D$ and $\|\cdot\|_{\partial D}$ are the
norms of Lebesgue spaces $L^2(D)$ and $L^2(\partial D)$, respectively.
We assume $\Omega$ is a polygonal domain and denote by
$\{\mathcal{T}_h\}_h$ a family of triangulations of
$\overline{\Omega}$, with the minimal angle condition satisfied. Let
$h_K ={\rm diam}(K)$ and $h = \max\{h_K: K\in \mathcal{T}_h\}$.
Denote by ${\cal E}_h$ the union of the boundaries of the elements $K$
of $\mathcal{T}_h$, ${\cal E}_h^i$ is the set of interior edges and ${\cal
E}_h^\partial={\cal E}_h\backslash{\cal E}_h^i$ is the set of boundary
edges. Let $e$ be the common edge of two elements $K^+$ and $K^-$, and
$\vect{n}^i$ = $\vect{n}|_{\partial K^i}$ be the unit outward normal
vector on $\partial K^i$ with $i = +,-$.  For any vector-valued
function $v$ and tensor-valued function $\vect{\tau}$, let $v^{\pm}$ =
$v|_{\partial K^{\pm}}$, $\vect{\tau}^{\pm}$ = $\vect{\tau}|_{\partial
  K^{\pm}}$. Then, we define the average $\{\cdot\}$, jump $[\cdot]$ and
tensor jump $\llbracket \cdot \rrbracket$ as follows: 
\begin{align*}
&\{v\} = \frac{1}{2}(v^+ + v^-), \qquad
&\{\vect{\tau}\} = \frac{1}{2}(\vect{\tau}^+ + \vect{\tau}^-) &\qquad {\rm on}\ e\in {\cal E}_h^i,\\
&[\vect{\tau}] = \vect{\tau}^+n^+ + \vect{\tau}^-n^-, \qquad  
&\llbracket v \rrbracket = v^+\odot n^+ + v^-\odot n^- &\qquad {\rm on}\ e\in {\cal E}_h^i,\\
&\{\vect{\tau}\} = \vect{\tau}, \qquad  
&\llbracket v \rrbracket = v\odot n &\qquad {\rm on}\ e\in {\cal E}_h^\partial.
\end{align*}
where $n$ is the outward unit normal vector on $\partial \Omega$.  
Let us give the following identities which are used often in this
section. For any vector-valued function $v$ and
tensor-valued function $\vect{\tau}$, all being continuously
differentiable over $K$, we have the following integration by parts
formula:
\begin{equation} \label{equ:int-parts}
\int_K {\rm div}\vect{\tau} \cdot v \,\mathrm{d}x = 
-\int_K \vect{\tau} : \vect{\varepsilon}(v) \,\mathrm{d}x
+\int_{\partial K}(\vect{\tau} n_K) \cdot v \,\mathrm{d}s, 
\end{equation}
and the following identity:
\begin{equation} \label{equ:dg-identity}
\sum_{K\in\mathcal{T}_{h}} \int_{\partial K} (\vect{\tau} n_K) \cdot
v\,\mathrm{d}s
= \int_{\mathcal{E}_{h}}\{\vect{\tau}\}: \llbracket
v\rrbracket\,\mathrm{d}s +
\int_{\mathcal{E}_{h}^i}[\vect{\tau}]\cdot\{v\}\,\mathrm{d}s.
\end{equation}

Throughout this paper, we shall use letter $C$ to denote a generic
positive constant independent of $h$ which may stand for different
values at its different occurrences. The notation $x \lesssim y$ means
$x \leq Cy$. 
For piecewise smooth vector-valued function $v$ and tensor-valued function
$\vect{\tau}$, let $\nabla_h$ and ${\rm div}_h$ be defined by the relation
$$
\nabla_hv|_K=\nabla v|_K, \quad
{\rm div}_h\vect{\tau}|_K={\rm div} \vect{\tau}|_K,
$$
on any element $K\in{\cal T}_h$, respectively.

\subsection{Mixed LDG scheme}

Now, let us introduce the mixed LDG formulation for
\eqref{equ:elasticity}.  We denote the piecewise vector and symmetric
matrix valued discrete spaces by $V_h$ and $\vect{\Sigma}_h$,
respectively. We multiply \eqref{equ:elasticity} by arbitrary test
functions $\vect{\tau}_h \in \vect{\Sigma}_h$ and $v_h \in V_h$,
respectively, and integration by parts over the element $K \in
\mathcal{T}_h$ to obtain 
\begin{equation}
\label{equ:weak1}
\left\{
\begin{aligned}
\sum_{K\in \mathcal{T}_h} (\mathcal{A}\vect{\sigma},
\vect{\tau}_h)_K + \sum_{K\in \mathcal{T}_h} (u, {\rm div}
\vect{\tau}_h)_K - \sum_{K\in
\mathcal{T}_h}\langle u, \vect{\tau}_h n_K \rangle_{\partial K} & = 0
&\forall \vect{\tau}_h \in \vect{\Sigma}_h,\\
-\sum_{K\in \mathcal{T}_h}  (\vect{\sigma}, \vect{\varepsilon}_h (v_h))_K 
+\sum_{K\in \mathcal{T}_h} \langle {\vect{\sigma}} n_K ,v_h
\rangle_{\partial K} & = \sum_{K\in \mathcal{T}_h} (f,v_h)_K
 &\forall v_h\in V_h.
\end{aligned}
\right.
\end{equation}
Let $\widehat{V}_h$ and $\widehat{\vect{\Sigma}}_h$ be
the piecewise vector and symmetric matrix valued discrete spaces on
$\mathcal{E}_h$, respectively. The approximate solution
$(\vect{\sigma}_h, u_h)$ is then defined by using the weak formulation
\eqref{equ:weak1}, namely 
\begin{equation}
\label{equ:weak-form-sum}
\left\{
\begin{aligned}
\sum_{K\in \mathcal{T}_h} (\mathcal{A}\vect{\sigma}_h,
\vect{\tau}_h)_K + \sum_{K\in \mathcal{T}_h} (u_h, {\rm div}
\vect{\tau}_h)_K - \sum_{K\in
\mathcal{T}_h}\langle \widehat{u}_h, \vect{\tau}_h n_K
\rangle_{\partial K} & = 0
&\forall \vect{\tau}_h \in \vect{\Sigma}_h,\\
-\sum_{K\in \mathcal{T}_h}  (\vect{\sigma}_h, \vect{\varepsilon}_h (v_h))_K  
+\sum_{K\in \mathcal{T}_h} \langle \widehat{\vect{\sigma}}_h n_K ,v_h
\rangle_{\partial K} & = \sum_{K\in \mathcal{T}_h} (f,v_h)_K
 &\forall v_h\in V_h,
\end{aligned}
\right.
\end{equation}
where the numerical traces $\widehat{u}_h \in \widehat{V}_h$ and
$\widehat{\vect \sigma}_h \in \widehat{\vect \Sigma}_h$ need to be
suitably defined to ensure the stability of the method and to enhance
its accuracy.  By the identity \eqref{equ:dg-identity} and integration
by parts \eqref{equ:int-parts}, we get from \eqref{equ:weak-form-sum}
that 
\begin{equation}
\left\{
\begin{aligned}
\int_\Omega \mathcal{A}\vect{\sigma}_h : \vect{\tau}_h\,\mathrm{d}x 
+ \int_\Omega u_h\cdot {\rm div}_h \vect{\tau}_h\,\mathrm{d}x 
- \int_{{\cal E}_h} \llbracket \widehat{u}_h \rrbracket:
\{\vect{\tau}_h\} \,\mathrm{d}s 
- \int_{{\cal E}_h^i} \{\widehat{u}_h\}\cdot [\vect{\tau}_h]
\,\mathrm{d}s &=
0 & \forall \vect{\tau}_h \in \vect{\Sigma}_h, \\
\int_\Omega {\rm div}_h\vect{\sigma}_h \cdot v_h\,\mathrm{d}x 
+ \int_{{\cal E}_h} \{\widehat{\vect{\sigma}}_h - \vect{\sigma}_h\}
: \llbracket v_h \rrbracket\,\mathrm{d}s 
+ \int_{{\cal E}_h^i}  [\widehat{\vect{\sigma}}_h -
\vect{\sigma}_h]\cdot \{v_h\}\,\mathrm{d}s 
&= \int_\Omega f \cdot v_h\,\mathrm{d}x & \forall v_h \in V_h. 
\end{aligned}
\right.
\end{equation}
Similar to the discussion for Poisson problem in
\cite{hong2018unified}, we choose mixed LDG numerical traces as
follow:
\begin{equation} \label{equ:numerical-flux}
\left\{
\begin{aligned}
\widehat{u}_h & =\{u_h\} - \eta [\vect{\sigma}_h] \quad {\rm on}\
\mathcal{E}_h^i,
\quad\quad  \widehat{u}_h=0 \quad {\rm on}\ \mathcal{E}_h^\partial,\\
\widehat{\vect{\sigma}}_h &= \{\vect{\sigma}_h\} ~\qquad\qquad {\rm on}\
\mathcal{E}_h.
\end{aligned}
\right.
\end{equation}
In such choice, it is easy to see that the numerical traces are single
valued. Further, we can see that if $u_h$ and $\vect{\sigma}_h$ are
replaced by the exact solution $u$ and $\vect{\sigma}$, then
$\widehat{u}_h = u|_{{\cal E}_h}$ and
$\widehat{\vect{\sigma}}_h=\vect{\sigma}|_{{\cal E}_h}$ on ${\cal
E}_h$. That is, the numerical traces are consistent. Moreover, we have 
$$ 
\llbracket \widehat{u}_h \rrbracket = \vect{0}, \quad
[\widehat{\vect{\sigma}}_h] = 0, \quad {\rm and} \quad \{
\widehat{\vect{\sigma}}_h - \vect{\sigma}_h \} = 0.
$$ 
Then, we obtain the mixed LDG formulation for \eqref{equ:elasticity}:
Find $(\vect{\sigma}_h, u_h) \in \vect{\Sigma}_h\times V_h$ such that
\begin{equation} \label{equ:mixed-DG}
\left\{
\begin{aligned}
a_h(\vect{\sigma}_h, \vect{\tau}_h) + b_h(\vect{\tau}_h, u_h) &= 0 
& \forall \vect{\tau}_h \in \vect{\Sigma}_h, \\
b_h(\vect{\sigma}_h, v_h) &= (f, v_h)_{\Omega}  &
\forall v_h\in V_h.
\end{aligned}
\right.
\end{equation}
Here, we choose $\eta = \eta_e h_e^{-1}, \eta_e = \mathcal{O}(1)$, and
define  
\begin{subequations}
\begin{align}
a_h(\vect{\sigma}, \vect{\tau}) &= \int_\Omega
\mathcal{A}\vect{\sigma}\cdot \vect{\tau} \,\mathrm{d}x + \int_{{\cal E}_h^i}
\eta_e h_e^{-1} [\vect{\sigma}]\cdot[\vect{\tau}]\,\mathrm{d}s &
\forall \vect{\sigma}, \vect{\tau} \in \vect{\Sigma}_h \cup
\vect{\vect{\Sigma}}, \label{equ:blinear_a}\\
b_h(\vect{\tau}, v) &= \int_\Omega {\rm div}_h \vect{\tau} \cdot
v\,\mathrm{d}x 
- \int_{{\cal E}_h^i} [\vect{\tau}] \cdot \{v\}\,\mathrm{d}s &  
\forall \vect{\tau} \in \vect{\Sigma}_h \cup \vect{\vect{\Sigma}},~
v\in V_h\cup V.
\label{equ:blinear_b}
\end{align}
\end{subequations}
Moreover, we define the following star norm
\begin{equation} \label{equ:star-norm}
\|\vect{\tau}\|_{*,\Omega}^2 := \int_{\Omega} (|\vect{\tau}|^2 +
|{\rm div}_h\vect{\tau}|^2) \,\mathrm{d}x + \int_{{\cal E}_h^i} \eta_e h_e^{-1}
|[\vect{\tau}]|^2 \,\mathrm{d}s \quad \forall \vect{\tau} \in \vect{\Sigma}_h
\cup \vect{\vect{\Sigma}}.
\end{equation}

In the following subsections, we prove the boundedness, stability
and consistency of the mixed LDG formulation \eqref{equ:mixed-DG}
when choosing
\begin{equation} \label{equ:spaces-elasticity}
\begin{aligned}
V_h = V_h^k &= \{v_h \in L^2(\Omega;\mathbb{R}^d): ~v_h |_K \in
  \mathcal{P}_k(K;\mathbb{R}^d) ~\forall K\in\mathcal{T}_h \},\\
\vect{\Sigma}_h=\vect{\Sigma}_h^{k+1} &= \{\vect{\tau}_h\in L^2(\Omega; \mathbb{S}):
  ~\vect{\tau}_h |_K\in \mathcal{P}_{k+1}(K;\mathbb{S}) ~
    \forall K\in\mathcal{T}_h\},
\end{aligned}
\end{equation}
for $k \geq 0$, which lead to the optimal order of convergence. 


\section{Well-posedness of the mixed LDG method}
\label{sec:well-posedness}

The well-posedness of the mixed LDG methods \eqref{equ:mixed-DG}
comes from the boundedness and the stability. 
\paragraph{Boundedness.}
It is easy to check by Cauchy-Schwarz inequality that $a_h(\cdot,
\cdot)$ satisfies 
\begin{equation} \label{equ:boundedness-a}
a_h(\vect{\sigma}, \vect{\tau}) \lesssim \|\vect{\sigma}\|_{*,\Omega}
\|\vect{\tau}\|_{*,\Omega} \quad \forall \vect{\sigma},
  \vect{\tau} \in \vect{\Sigma}_h \cup \vect{\Sigma}.
\end{equation}
The remaining task is the boundedness of $b_h(\cdot, \cdot)$.  To this
end, let us recall the {\em lifting operator} $r_e :
(L^2({\cal E}_h))^d \rightarrow V_h$ defined by
\begin{equation} \label{equ:lifting}
\int_\Omega r_e(w)\cdot v_h \,\mathrm{d}x = -\int_{e}
w \cdot \{v_h\} \,\mathrm{d}s\quad  \forall v_h \in V_h. 
\end{equation}
Then, we have the following lemma (see also \cite{arnold2002unified,
brezzi2000discontinuous}).
\begin{lemma}\label{lem:lifting}
For any edge $e\in \partial K$, it holds 
\begin{equation} \label{equ:lifting-boundedness} 
\|r_e(w)\|_{0,\Omega} \lesssim  h_e^{-1/2} \|w\|_{0,e}.
\end{equation}
\end{lemma}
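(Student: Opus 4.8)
The plan is to test the defining relation \eqref{equ:lifting} against the lifting $r_e(w)$ itself and then trade the resulting edge norm for a volume norm by a discrete trace inequality. First I would observe that, by choosing any $v_h \in V_h$ supported away from the elements adjacent to $e$, the right-hand side of \eqref{equ:lifting} vanishes; hence $r_e(w)$ is supported on the (at most two) elements $K^{\pm}$ sharing $e$, and $\|r_e(w)\|_{0,\Omega}$ coincides with its norm over $K^+\cup K^-$. Setting $v_h = r_e(w)$ in \eqref{equ:lifting} gives $\|r_e(w)\|_{0,\Omega}^2 = -\int_e w \cdot \{r_e(w)\}\,\mathrm{d}s$, and the Cauchy-Schwarz inequality on $e$ yields $\|r_e(w)\|_{0,\Omega}^2 \le \|w\|_{0,e}\,\|\{r_e(w)\}\|_{0,e}$, so the entire estimate reduces to controlling $\|\{r_e(w)\}\|_{0,e}$.

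The key step is a discrete trace (inverse) inequality. Since $r_e(w)\in V_h$ restricts to a polynomial of fixed degree $k$ on each element, the standard scaling argument on a shape-regular mesh (guaranteed here by the minimal angle condition) provides $\|r_e(w)\|_{0,e} \lesssim h_e^{-1/2}\|r_e(w)\|_{0,K}$ for each element $K$ adjacent to $e$. Because the average $\{r_e(w)\}$ on $e$ only involves the traces from $K^+$ and $K^-$, summing these two contributions gives $\|\{r_e(w)\}\|_{0,e} \lesssim h_e^{-1/2}\|r_e(w)\|_{0,\Omega}$.

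Substituting this bound back and cancelling one factor of $\|r_e(w)\|_{0,\Omega}$ (the estimate being trivial when that quantity vanishes) yields $\|r_e(w)\|_{0,\Omega} \lesssim h_e^{-1/2}\|w\|_{0,e}$, which is the claim. The only nontrivial ingredient is the discrete trace inequality, and that is precisely where the polynomial structure of $V_h$ and the shape-regularity of the triangulation are essential: without the finite-dimensionality of $V_h$ the edge norm could not be bounded by the volume norm at the cost of only the single factor $h_e^{-1/2}$. Everything else is a direct and routine manipulation.
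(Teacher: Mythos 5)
Your proposal is correct and follows essentially the same argument as the paper: test the defining relation with $v_h = r_e(w)$, apply Cauchy--Schwarz on $e$, bound the edge norm of the average by the volume norm via the polynomial inverse (discrete trace) inequality, and cancel one factor of $\|r_e(w)\|_{0,\Omega}$. The only addition is your explicit observation that $r_e(w)$ is supported on the elements adjacent to $e$, which the paper leaves implicit and which is not needed for the estimate.
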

\begin{proof}
By taking $v_h = r_e(w)$ in \eqref{equ:lifting} and applying the
inverse inequality, we obtain 
$$ 
\| r_e(w) \|_{0,\Omega}^2 \leq \frac{1}{2} \|w\|_{0,e}
(\|r_e(w)^+\|_{0,e} + \|r_e(w)^-\|_{0,e}) \lesssim  h_e^{-1/2}
\|w\|_{0,e} \| r_e(w) \|_{0,\Omega},
$$
which gives rise to \eqref{equ:lifting-boundedness}.
\end{proof}

\begin{lemma} \label{lem:boundedness-b}
It holds that 
\begin{align}
b_h(\vect{\tau}, v_h) &\lesssim \|\vect{\tau}\|_{*,\Omega}
\|v_h\|_{0,\Omega} & \forall \vect{\tau} \in \vect{\Sigma}_h \cup
\vect{\Sigma},\; \forall v_h \in V_h, \label{equ:boundedness-b1}\\
b_h(\vect{\tau}, v) &\lesssim \|\vect{\tau}\|_{*,\Omega}
(\|v\|_{0,\Omega} + h|v|_{1,\Omega,h}) &\forall \vect{\tau} \in
\vect{\Sigma}_h \cup \vect{\Sigma},\; \forall v \in V \cap
H^1(\Omega;\mathbb{R}^d).
\label{equ:boundedness-b2}
\end{align}
\end{lemma}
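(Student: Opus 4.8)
The plan is to prove the two inequalities separately, exploiting the key structural distinction between them: in \eqref{equ:boundedness-b1} the test function $v_h \in V_h$ is discrete, which lets me invoke the lifting operator $r_e$, whereas in \eqref{equ:boundedness-b2} the function $v \in V \cap H^1(\Omega;\mathbb{R}^d)$ is genuinely $H^1$-regular, which forces a direct trace estimate but is exactly what produces the extra $h|v|_{1,\Omega,h}$ term.

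For \eqref{equ:boundedness-b1}, the idea is to recast the interior-edge contribution of $b_h$ as a volume integral using the lifting operator of \eqref{equ:lifting}. Since $v_h \in V_h$, I may write $-\int_{{\cal E}_h^i}[\vect{\tau}]\cdot\{v_h\}\,\mathrm{d}s = \sum_{e\in {\cal E}_h^i}\int_\Omega r_e([\vect{\tau}])\cdot v_h\,\mathrm{d}x$, so that
$$
b_h(\vect{\tau}, v_h) = \int_\Omega \Big({\rm div}_h\vect{\tau} + \sum_{e\in{\cal E}_h^i} r_e([\vect{\tau}])\Big)\cdot v_h\,\mathrm{d}x .
$$
Cauchy--Schwarz then reduces matters to bounding the $L^2$ norm of the term in parentheses by $\|\vect{\tau}\|_{*,\Omega}$. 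The first summand is immediate, as $\|{\rm div}_h\vect{\tau}\|_{0,\Omega}\le\|\vect{\tau}\|_{*,\Omega}$. For the lifting sum I would use that each $r_e([\vect{\tau}])$ is supported on the (at most two) elements adjacent to $e$, so the finite-overlap property coming from shape regularity gives $\|\sum_e r_e([\vect{\tau}])\|_{0,\Omega}^2 \lesssim \sum_e \|r_e([\vect{\tau}])\|_{0,\Omega}^2$; Lemma \ref{lem:lifting} bounds each term by $h_e^{-1}\|[\vect{\tau}]\|_{0,e}^2$, and since $\eta_e = \mathcal{O}(1)$ is bounded below, the sum is $\lesssim \int_{{\cal E}_h^i}\eta_e h_e^{-1}|[\vect{\tau}]|^2\,\mathrm{d}s \le \|\vect{\tau}\|_{*,\Omega}^2$, as required.

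For \eqref{equ:boundedness-b2}, the lifting trick is unavailable because $v \notin V_h$ in general, so I would estimate the edge term directly. The volume term is bounded as before by $\|\vect{\tau}\|_{*,\Omega}\|v\|_{0,\Omega}$. For the interior-edge term, Cauchy--Schwarz edge by edge, after inserting the scaling factors $h_e^{\pm1/2}$, yields
$$
\int_{{\cal E}_h^i}[\vect{\tau}]\cdot\{v\}\,\mathrm{d}s \le \Big(\sum_e h_e^{-1}\|[\vect{\tau}]\|_{0,e}^2\Big)^{1/2}\Big(\sum_e h_e\|\{v\}\|_{0,e}^2\Big)^{1/2} .
$$
The first factor is $\lesssim \|\vect{\tau}\|_{*,\Omega}$ exactly as in the previous paragraph. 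For the second I would apply the scaled trace inequality $\|v\|_{0,e}^2\lesssim h_e^{-1}\|v\|_{0,K}^2 + h_e|v|_{1,K}^2$ on each adjacent element $K$, giving $h_e\|\{v\}\|_{0,e}^2 \lesssim \|v\|_{0,K}^2 + h_e^2|v|_{1,K}^2$; summing over edges with finite overlap yields $\sum_e h_e\|\{v\}\|_{0,e}^2 \lesssim \|v\|_{0,\Omega}^2 + h^2|v|_{1,\Omega,h}^2$. Combining the two factors and adding the volume term gives \eqref{equ:boundedness-b2}.

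The routine bookkeeping---Cauchy--Schwarz, the finite-overlap counting, and the shape-regularity constants hidden in the trace and inverse inequalities---is standard. The only genuinely structural point is the first inequality, where the lifting operator is essential to recast the edge contribution as an $L^2$ pairing against $v_h$ \emph{alone}, with no gradient of $v_h$ appearing; this is precisely what makes the estimate compatible with a test function that carries no $H^1$-regularity, and it is the step I expect to require the most care. The remaining difficulty is merely in keeping the $\eta_e$ weights and the mesh-scaling powers consistent so that both edge sums collapse cleanly into $\|\vect{\tau}\|_{*,\Omega}$.
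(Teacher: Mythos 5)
Your proposal is correct and follows essentially the same route as the paper's proof: the lifting operator $r_e$ recasts the interior-edge term of $b_h(\vect{\tau},v_h)$ as a volume pairing so that Cauchy--Schwarz and Lemma \ref{lem:lifting} yield \eqref{equ:boundedness-b1}, while \eqref{equ:boundedness-b2} is obtained by edge-by-edge Cauchy--Schwarz with $h_e^{\pm 1/2}$ weights and the scaled trace inequality. The only difference is that you spell out the finite-overlap and trace-inequality bookkeeping that the paper leaves implicit, which is fine.
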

\begin{proof}
In light of Lemma \ref{lem:lifting}, we have for any $v_h \in V_h$
\begin{equation*} \label{equ:boundedness-b}
\begin{aligned}
b_h(\vect{\tau}, v_h) 
& = \int_{\Omega} \left( {\rm div}_h \vect{\tau} + \sum_{e\in {\cal
    E}_h^i} r_e([\vect{\tau}]) \right) \cdot v_h \,\mathrm{d}x \\
& \lesssim  \|v_h\|_{0,\Omega} \left( \|{\rm div}_h\vect{\tau}\|_{0,\Omega}^2 
+  \int_{{\cal E}_h^i} h_e^{-1} |[\vect{\tau}]|^2 \,\mathrm{d}s\right)^{1/2}\\
& \leq  \|v_h\|_{0,\Omega} \|\vect{\tau}\|_{*,\Omega}.
\end{aligned}
\end{equation*}
Furthermore, for any $v \in V \cap H^1(\Omega;\mathbb{R}^d)$, 
$$ 
\begin{aligned}
b_h(\vect{\tau}, v) & \leq \|{\rm div}_h \vect{\tau}\|_{0,\Omega}
\|v\|_{0,\Omega} + \sum_{e\in \mathcal{E}_h^i}
h_e^{-1/2}\|[\vect{\tau}]\|_{0,e} h_e^{1/2}\|\{v\}\|_{0,e}
\lesssim \|\vect{\tau}\|_{*,\Omega} (\|v\|_{0,\Omega} +
h|v|_{1,\Omega,h}). 
\end{aligned}
$$ 
Here, we use the trace inequality in the last step. 
\end{proof}
  
\paragraph{Stability.}  
According to the theory of mixed method, the stability of the
saddle point problem \eqref{equ:mixed-DG} is the corollary of the
following two conditions \cite{brezzi1974existence,brezzi1991mixed}:
\begin{enumerate}
\item K-ellipticity: There exists a constant $C > 0$, independent of
the grid size such that 
\begin{equation} \label{equ:K-ellipticity}
a_h(\vect{\tau}_h, \vect{\tau}_h) \ge C
\|\vect{\tau}_h\|_{*,\Omega}^2 \qquad \forall \vect{\tau}_h \in Z_h,
\end{equation}
where $Z_h = \{\vect{\tau}_h \in \vect{\Sigma}_h~|~ b_h(\vect{\tau}_h,
    v_h) = 0\; \forall v_h \in V_h\}$.
\item The discrete inf-sup condition: There exists a constant $C > 0$,
  independent of the grid size such that 
\begin{equation} \label{equ:inf-sup}
\inf_{v_h \in V_h} \sup_{\vect{\tau}_h \in \vect{\Sigma}_h}
\frac{b_h(\vect{\tau}_h, v_h)}{\|\vect{\tau}_h\|_{*,\Omega}
  \|v_h\|_{0,\Omega}} \ge C.
\end{equation}
\end{enumerate}
 
First, we prove the inf-sup condition \eqref{equ:inf-sup} in the
following lemma.

\begin{lemma} [Inf-sup condition] \label{lem:inf-sup}
When choosing $\vect{\Sigma}_h \times V_h = \vect{\Sigma}_h^{k+1}
\times V_h^k$ for $k \geq 0$, the discrete inf-sup condition
\eqref{equ:inf-sup} holds true for mixed LDG method 
\eqref{equ:mixed-DG} of linear elasticity problem.
\end{lemma}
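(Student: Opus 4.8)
The plan is to establish the discrete inf-sup condition
\eqref{equ:inf-sup} by the classical Fortin-type argument: given an
arbitrary $v_h \in V_h^k$, I need to construct a tensor
$\vect{\tau}_h \in \vect{\Sigma}_h^{k+1}$ whose bilinear pairing
$b_h(\vect{\tau}_h, v_h)$ controls $\|v_h\|_{0,\Omega}^2$ from below
while its star-norm $\|\vect{\tau}_h\|_{*,\Omega}$ remains bounded by
$\|v_h\|_{0,\Omega}$. The natural way to generate such a $\vect{\tau}_h$
is to solve an auxiliary continuous elasticity (or vector Poisson)
problem: let $\vect{\phi} \in H^1_0(\Omega;\mathbb{R}^d)$ solve
$-{\rm div}\,\vect{\varepsilon}(\vect{\phi}) = v_h$ (or an analogous
strongly elliptic system), so that by elliptic regularity on a convex
or smooth-enough $\Omega$ one has $\vect{\phi} \in
H^2(\Omega;\mathbb{R}^d)$ with $\|\vect{\phi}\|_{2,\Omega} \lesssim
\|v_h\|_{0,\Omega}$. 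Setting $\vect{\psi} =
\vect{\varepsilon}(\vect{\phi}) \in H^1(\Omega;\mathbb{S})$ gives a
symmetric tensor in $H({\rm div},\Omega;\mathbb{S})$ satisfying
${\rm div}\,\vect{\psi} = -v_h$ and $\|\vect{\psi}\|_{1,\Omega}
\lesssim \|v_h\|_{0,\Omega}$.

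Next I would project $\vect{\psi}$ into the discrete space. The key
interpolation tool here is a BDM-type (Brezzi--Douglas--Marini)
interpolation operator $\Pi_h$ onto $\vect{\Sigma}_h^{k+1}$ applied
componentwise to the rows of the symmetric tensor; since
$\vect{\psi}$ lies in $H^1$ I can invoke the commuting-diagram and
approximation properties of $\Pi_h$, namely that $P_h\,{\rm div} =
{\rm div}_h\,\Pi_h$ where $P_h$ is the $L^2$-projection onto $V_h^k$,
together with the standard estimate $\|\vect{\psi} -
\Pi_h\vect{\psi}\|_{0,\Omega} \lesssim h\,\|\vect{\psi}\|_{1,\Omega}$.
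Taking $\vect{\tau}_h = \Pi_h\vect{\psi}$, the commuting property
yields ${\rm div}_h\vect{\tau}_h = P_h({\rm div}\,\vect{\psi}) =
-P_h v_h = -v_h$, because $v_h \in V_h^k$ is already in the range of
$P_h$. Crucially, because $\vect{\psi} \in H^1(\Omega;\mathbb{S})
\subset H({\rm div},\Omega;\mathbb{S})$ is globally single-valued,
its BDM interpolant is $H({\rm div})$-conforming, so $[\vect{\tau}_h]
= 0$ on all interior edges; this kills the jump term in both $b_h$ and
the star norm.

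With this construction the computation closes quickly. Using
$[\vect{\tau}_h] = 0$ in the definition \eqref{equ:blinear_b} of
$b_h$, I get $b_h(\vect{\tau}_h, v_h) = \int_\Omega {\rm div}_h
\vect{\tau}_h \cdot v_h\,\mathrm{d}x = -\int_\Omega v_h \cdot v_h
\,\mathrm{d}x$, so up to a sign $b_h(\vect{\tau}_h, v_h) =
\|v_h\|_{0,\Omega}^2$ (replacing $v_h$ by $-v_h$ if one wants a
positive sign). For the denominator, the star norm
\eqref{equ:star-norm} reduces, again by $[\vect{\tau}_h] = 0$, to
$\|\vect{\tau}_h\|_{*,\Omega}^2 = \|\vect{\tau}_h\|_{0,\Omega}^2 +
\|{\rm div}_h\vect{\tau}_h\|_{0,\Omega}^2 = \|\vect{\tau}_h\|_{0,\Omega}^2
+ \|v_h\|_{0,\Omega}^2$, and the $L^2$-stability of $\Pi_h$ combined
with the regularity bound gives $\|\vect{\tau}_h\|_{0,\Omega}
\lesssim \|\vect{\psi}\|_{1,\Omega} \lesssim \|v_h\|_{0,\Omega}$.
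Hence $\|\vect{\tau}_h\|_{*,\Omega} \lesssim \|v_h\|_{0,\Omega}$, and
combining numerator and denominator yields $b_h(\vect{\tau}_h, v_h) /
(\|\vect{\tau}_h\|_{*,\Omega}\|v_h\|_{0,\Omega}) \gtrsim 1$, which is
exactly \eqref{equ:inf-sup}.

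I expect the main obstacle to be the regularity step rather than the
interpolation algebra: the clean bound $\|\vect{\psi}\|_{1,\Omega}
\lesssim \|v_h\|_{0,\Omega}$ requires $H^2$ elliptic regularity for
the auxiliary problem, which in turn needs $\Omega$ convex (or a
$C^{1,1}$ boundary). If the paper only assumes $\Omega$ polygonal,
I would either need to invoke the stated regularity as a hypothesis or
replace the $H^2$-regularity route by a direct, mesh-explicit
construction of $\vect{\tau}_h$ on each element (for instance via the
element-wise right inverse of ${\rm div}_h$ built from the
$\mathcal{P}_{k+1}$--$\mathcal{P}_k$ local surjectivity together with
a Fortin patch argument), thereby circumventing the global regularity
assumption altogether. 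A secondary technical point is confirming that
the symmetric-tensor BDM interpolant genuinely preserves symmetry and
retains the commuting-diagram property when applied row-wise; this is
the reason the hypothesis $\vect{\psi} = \vect{\varepsilon}
(\vect{\phi})$ (an exact symmetric gradient) is convenient, since it
guarantees both symmetry and the divergence identity simultaneously.
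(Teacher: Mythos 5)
There is a genuine gap, and it is exactly the point you dismissed as ``secondary.'' Your test tensor is $\vect{\tau}_h=\Pi_h\vect{\psi}$ with $\Pi_h$ the row-wise BDM interpolant, but the space $\vect{\Sigma}_h^{k+1}$ in \eqref{equ:inf-sup} consists of \emph{symmetric} matrix-valued polynomials, and the row-wise BDM interpolant of a symmetric tensor field is in general \emph{not} symmetric: the $(i,j)$ entry of $\Pi_h\vect{\psi}$ is produced by the BDM degrees of freedom applied to row $i$, the $(j,i)$ entry by those applied to row $j$, and these two local projections do not commute with transposition. The hypothesis $\vect{\psi}=\vect{\varepsilon}(\vect{\phi})$ guarantees symmetry of $\vect{\psi}$, not of its interpolant. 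Consequently your $\vect{\tau}_h$ lies in ${\rm BDM}_{k+1}^{d\times d}$ (matrix-valued, $H({\rm div})$-conforming) but not in $\vect{\Sigma}_h^{k+1}$, so it is not admissible in the supremum of \eqref{equ:inf-sup}. This is not a technicality to be patched at the end; it is the central obstruction of strongly symmetric mixed elements for elasticity (the reason Arnold--Winther-type elements are intricate, and the reason Section \ref{sec:L2error} of the paper must symmetrize its own row-wise BDM projection by a divergence-free curl correction, at the price of Stokes-pair stability and $k\geq d$). Any repair of your argument that restores symmetry by such a correction inherits the restriction $k\geq d$, whereas the lemma asserts the inf-sup condition for \emph{all} $k\geq 0$. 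Your fallback suggestion (element-wise right inverse of ${\rm div}_h$) also fails as stated, because a purely local construction leaves the jump term $-\int_{{\cal E}_h^i}[\vect{\tau}_h]\cdot\{v_h\}\,\mathrm{d}s$ in \eqref{equ:blinear_b} uncontrolled, and it can cancel the positive volume term.

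For comparison, the paper avoids the conformity/symmetry conflict by weakening conformity instead of symmetry: it takes the test tensor $\bar{\vect{\tau}}_h$ in the nonconforming space $\Sigma_{k+1,h}^{(1)}$ of Wu, Gong and Xu, consisting of symmetric piecewise $\mathcal{P}_{k+1}$ tensors whose normal-component moments up to degree $k$ are continuous across interior faces. Their Lemmas 3.3 and 4.1 give, for every $v_h\in V_h^k$, a $\bar{\vect{\tau}}_h$ with ${\rm div}\,\bar{\vect{\tau}}_h=v_h$ and $\|\bar{\vect{\tau}}_h\|_{*,\Omega}\lesssim\|v_h\|_{0,\Omega}$; full $H({\rm div})$-conformity is never needed, because $\{v_h\}$ has degree at most $k$ on each face, so the moment continuity alone annihilates the jump term in $b_h$. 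That weaker requirement is compatible with exact symmetry for every $k\geq 0$, which is what makes the lemma hold in the stated generality. Incidentally, the elliptic-regularity concern you raised is the lesser issue (it can be circumvented by extending $v_h$ by zero to a larger smooth domain and solving there), but fixing it does not rescue the symmetry problem.
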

\begin{proof}
In \cite{wu2017interior}, Wu, Gong, and Xu introduced a class of
nonconforming finite element spaces for $k\geq 0$ that 
$$ 
\begin{aligned}
\Sigma_{k+1,h}^{(1)} := \{\vect{\tau} ~|~ & \vect{\tau}|_K \in
\mathcal{P}_{k+1}(K;\mathbb{S}), \text{and the moments of $\vect{\tau}n$}
\\
&\text{up to degree $k$ are continuous across the interior edges}\}.
\end{aligned}
$$ 
Thanks to the Lemma 3.3 and Lemma 4.1 in \cite{wu2017interior}, we know
that for any $v_h \in V_h$, there exists a $\bar{\vect{\tau}}_h \in
\Sigma_{k+1,h}^{(1)}$ such that 
\begin{equation} \label{equ:elasticity-inf-sup} 
{\rm div} \bar{\vect{\tau}}_h = v_h\quad {\rm and} \quad
\|\bar{\vect{\tau}}_h\|_{*,\Omega} \lesssim \|v_h\|_{0,\Omega}.
\end{equation} 
Note that $\Sigma_{k+1,h}^{(1)} \subset \vect{\Sigma}_h^{k+1}$ and the
property of $\Sigma_{k+1,h}^{(1)}$ implies that 
$$ 
\int_{{\cal E}_h^i} [\bar{\vect{\tau}}_h] \cdot \{v_h\} \,\mathrm{d}s
= 0 \qquad \forall v_h \in V_h.
$$ 
Here, we use the fact that $\{v_h\}$ is of degree $k$ on the edge.
Therefore, for any $v_h\in V_h^k$ 
$$ 
\sup_{\vect{\tau}_h \in \vect{\Sigma_h^{k+1}}}
\frac{b_h(\vect{\tau}_h, v_h)}{\|\vect{\tau}_h\|_{*,\Omega}} \geq 
\frac{b_h(\bar{\vect{\tau}}_h, v_h)}{\|\bar{\vect{\tau}}_h\|_{*,\Omega}} = 
\frac{\int_\Omega {\rm div} \bar{\vect{\tau}}_h \cdot v_h
\,\mathrm{d}x}{\|\bar{\vect{\tau}}_h\|_{*,\Omega}}
\gtrsim \|v_h\|_{0,\Omega}.
$$ 
Then, we finish the proof.
\end{proof}

\begin{theorem} \label{thm:well-posedness}
The mixed LDG scheme \eqref{equ:mixed-DG} is well-posed for
$(\vect{\Sigma}_h^{k+1}, \|\cdot\|_{*,\Omega})$ and $(V_h^k,
\|\cdot\|_{0,\Omega})$. 
\end{theorem}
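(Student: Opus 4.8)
The plan is to invoke the standard Brezzi theory for mixed saddle point problems, which guarantees well-posedness of \eqref{equ:mixed-DG} once four ingredients are in place: boundedness of $a_h$, boundedness of $b_h$, the K-ellipticity \eqref{equ:K-ellipticity} of $a_h$ on the kernel $Z_h$, and the inf-sup condition \eqref{equ:inf-sup}. Three of these are already available: boundedness of $a_h$ is \eqref{equ:boundedness-a}, boundedness of $b_h$ is Lemma \ref{lem:boundedness-b}, and the inf-sup condition is Lemma \ref{lem:inf-sup}. Thus the only remaining task is to verify K-ellipticity, and that is where I would concentrate the effort.

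To establish \eqref{equ:K-ellipticity}, first observe that since $\mathcal{A}$ is bounded and symmetric positive definite,
$$
a_h(\vect{\tau}_h, \vect{\tau}_h) = \int_\Omega \mathcal{A}\vect{\tau}_h : \vect{\tau}_h \,\mathrm{d}x + \int_{\mathcal{E}_h^i} \eta_e h_e^{-1} |[\vect{\tau}_h]|^2 \,\mathrm{d}s \gtrsim \|\vect{\tau}_h\|_{0,\Omega}^2 + \int_{\mathcal{E}_h^i} \eta_e h_e^{-1} |[\vect{\tau}_h]|^2 \,\mathrm{d}s.
$$
Comparing with the definition \eqref{equ:star-norm} of the star norm, the only missing contribution is $\|{\rm div}_h \vect{\tau}_h\|_{0,\Omega}^2$. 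The key point is that this term can be controlled \emph{on the kernel} $Z_h$ by the jump term already present in $a_h(\vect{\tau}_h,\vect{\tau}_h)$.

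For the divergence control, I would rewrite $b_h$ using the lifting operator exactly as in the proof of Lemma \ref{lem:boundedness-b}, namely $b_h(\vect{\tau}_h, v_h) = \int_\Omega ( {\rm div}_h \vect{\tau}_h + \sum_{e\in\mathcal{E}_h^i} r_e([\vect{\tau}_h]) )\cdot v_h\,\mathrm{d}x$. Since $\vect{\tau}_h \in \vect{\Sigma}_h^{k+1}$ has piecewise divergence of degree $k$, and each $r_e([\vect{\tau}_h])\in V_h$ by construction \eqref{equ:lifting}, the bracketed field lies in $V_h = V_h^k$. For $\vect{\tau}_h \in Z_h$, choosing $v_h$ equal to this field in the kernel relation forces ${\rm div}_h\vect{\tau}_h = -\sum_{e} r_e([\vect{\tau}_h])$. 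Applying the lifting bound \eqref{equ:lifting-boundedness} together with finite overlap of the supports then yields $\|{\rm div}_h\vect{\tau}_h\|_{0,\Omega}^2 \lesssim \sum_{e\in\mathcal{E}_h^i} h_e^{-1}\|[\vect{\tau}_h]\|_{0,e}^2 \lesssim a_h(\vect{\tau}_h,\vect{\tau}_h)$, where the last step uses $\eta_e = \mathcal{O}(1)$. Combining these estimates gives $\|\vect{\tau}_h\|_{*,\Omega}^2 \lesssim a_h(\vect{\tau}_h,\vect{\tau}_h)$ on $Z_h$, which is precisely \eqref{equ:K-ellipticity}, and well-posedness follows from Brezzi's theorem.

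The main obstacle is the divergence control step; the rest is bookkeeping within a standard framework. The crucial structural facts that make it work are that $\mathrm{div}$ maps $\vect{\Sigma}_h^{k+1}$ into $V_h^k$ and that the lifting $r_e$ produces elements of $V_h$, so that the specific combination ${\rm div}_h\vect{\tau}_h + \sum_e r_e([\vect{\tau}_h])$ is an admissible test function; this lets the kernel condition pin down the divergence entirely in terms of the jumps, which the penalty term then absorbs.
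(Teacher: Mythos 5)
Your proposal is correct and follows essentially the same route as the paper's proof: reduce everything to K-ellipticity via Brezzi theory (boundedness and the inf-sup condition being already established), then use the lifting operator $r_e$ to characterize the kernel $Z_h$ by ${\rm div}_h\vect{\tau}_h + \sum_{e\in\mathcal{E}_h^i} r_e([\vect{\tau}_h]) = 0$, so that Lemma \ref{lem:lifting} bounds $\|{\rm div}_h\vect{\tau}_h\|_{0,\Omega}$ by the jump terms, which the penalty in $a_h$ absorbs. Your explicit remark that ${\rm div}_h\vect{\tau}_h + \sum_e r_e([\vect{\tau}_h])$ lies in $V_h^k$ (so it is an admissible test function) is the same fact the paper uses implicitly when identifying $Z_h$.
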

\begin{proof}
In light of the boundedness and Lemma \ref{lem:inf-sup}, we only need
to prove the K-ellipticity \eqref{equ:K-ellipticity}. By the
definition of lifting operator \eqref{equ:lifting}, we have 
$$ 
b_h(\vect{\tau}_h, v_h) = \int_\Omega \left( {\rm div}_h\vect{\tau}_h +
\sum_{e\in{\cal E}_h^i} r_e([\vect{\tau}_h]) \right) \cdot v_h
\,\mathrm{d}x, 
$$
which implies that  
$$
Z_h = \{ \vect{\tau}_h\in \vect{\Sigma}_h^{k+1}~|~
{\rm div}_h\vect{\tau}_h + \sum_{e\in{\cal E}_h^i} r_e([\vect{\tau}_h]) =
0\}.
$$
With the help of the Lemma \ref{lem:lifting}, we see that  
$$
\|{\rm div}_h \vect{\tau}_h\|_{0,\Omega} = \|\sum_{e\in{\cal E}_h^i}
r_e([\vect{\tau}_h])\|_{0,\Omega} \lesssim \sum_{e\in{\cal E}_h^i}
h_e^{-1/2} \|[\vect{\tau}_h]\|_{0,e} \qquad \forall \vect{\tau}_h \in
Z_h. 
$$
Let $\eta_0=\inf_{e\in{\cal E}_h^i} \eta_e$ be a positive constant
that independent of the grid size. Then, 
\begin{equation}
a_h(\vect{\tau}_h, \vect{\tau}_h) \geq \|\vect{\tau}_h\|_{0,\Omega}^2 +
\eta_0 \sum_{e\in{\cal E}_h^i}  h_e^{-1} \|[\vect{\tau}_h]\|_{0,e}^2
\gtrsim \|\vect{\tau}_h\|_{*,\Omega}^2 \qquad \forall \vect{\tau}_h
\in Z_h. 
\end{equation}
Then, we finish the proof.
\end{proof}

\begin{remark}
From Lemma \ref{lem:lifting}, we, Gong, can see that the penalty term
$\int_{{\cal E}_h^i} \eta_e h_e^{-1} [\vect{\sigma}_h] \cdot
[\vect{\tau}_h]\,\mathrm{d}s$ can be replaced by $\sum_{e\in{\cal E}_h^i}
\int_\Omega \eta_e r_e([\vect{\sigma}_h]) \cdot r_e([\vect{\tau}_h])
\,\mathrm{d}x$, and the well-posedness of the corresponding scheme can
be proved similarly with a modified norm $\|\vect{\tau}\|_{*,\Omega}^2
:= \int_{\Omega} (|\vect{\tau}|^2 + |{\rm div}_h\vect{\tau}|^2 +
\sum_{e\in{\cal E}_h^i} |r_e([\vect{\tau}])|^2)\,\mathrm{d}x$.
\end{remark}

\section{A priori error estimates in energy norms}
\label{sec:error-estimate}

\begin{lemma}\label{lem:consistency}
Assume the solution $(\vect{\sigma},u)\in \Sigma \times
H^1(\Omega;\mathbb{R}^d)$, we have
\begin{equation} \label{equ:consistency}
\left\{
\begin{aligned}
a_h(\vect{\sigma} - \vect{\sigma}_h, \vect{\tau}_h) + 
b_h(\vect{\tau}_h, u -u_h) & = 0 & \forall \vect{\tau}_h\in
\vect{\Sigma}_h, \\
b_h(\vect{\sigma} - \vect{\sigma}_h, v_h) &= 0 &
\forall v_h\in V_h. 
\end{aligned}
\right.
\end{equation}
\end{lemma}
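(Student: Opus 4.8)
The plan is to establish the two error identities \eqref{equ:consistency} by showing that the exact solution $(\vect{\sigma}, u)$ satisfies the \emph{same} pair of equations that defines the computed solution $(\vect{\sigma}_h, u_h)$ in \eqref{equ:mixed-DG}, and then subtracting. Concretely, I would verify that for every $\vect{\tau}_h \in \vect{\Sigma}_h$ and $v_h \in V_h$,
$$
a_h(\vect{\sigma}, \vect{\tau}_h) + b_h(\vect{\tau}_h, u) = 0, \qquad b_h(\vect{\sigma}, v_h) = (f, v_h)_\Omega ,
$$
after which subtracting the two lines of \eqref{equ:mixed-DG} gives \eqref{equ:consistency} at once. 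The whole content is thus the consistency of the numerical traces \eqref{equ:numerical-flux}, already flagged in the text via $\llbracket \widehat{u}_h\rrbracket = \vect{0}$, $[\widehat{\vect{\sigma}}_h] = 0$ and $\{\widehat{\vect{\sigma}}_h - \vect{\sigma}_h\} = 0$.

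The second identity is the easy one. Since $\vect{\sigma} \in \vect{\Sigma} = H({\rm div}, \Omega; \mathbb{S})$, its normal component is continuous across interior edges, so $[\vect{\sigma}] = 0$ on every $e \in \mathcal{E}_h^i$. The edge term in \eqref{equ:blinear_b} then drops out, leaving $b_h(\vect{\sigma}, v_h) = \int_\Omega {\rm div}\,\vect{\sigma} \cdot v_h\,\mathrm{d}x = (f, v_h)_\Omega$ by the second line of \eqref{equ:elasticity}.

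For the first identity I would proceed in two steps. Using the constitutive relation $\mathcal{A}\vect{\sigma} = \vect{\varepsilon}(u)$ from \eqref{equ:elasticity}, together with $[\vect{\sigma}] = 0$, the penalty term in \eqref{equ:blinear_a} vanishes and $a_h(\vect{\sigma}, \vect{\tau}_h) = \int_\Omega \vect{\varepsilon}(u) : \vect{\tau}_h\,\mathrm{d}x$. For the $b_h$ term I would apply the element-wise integration by parts \eqref{equ:int-parts} to $\int_\Omega {\rm div}_h\vect{\tau}_h \cdot u\,\mathrm{d}x$ and collect the resulting boundary contributions through the DG identity \eqref{equ:dg-identity}. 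The crucial observation is that $u \in H^1(\Omega;\mathbb{R}^d)$ is single-valued across interior edges and vanishes on $\partial\Omega$, so the tensor jump $\llbracket u\rrbracket = \vect{0}$ on all of $\mathcal{E}_h$; hence the $\{\vect{\tau}_h\} : \llbracket u\rrbracket$ contribution disappears and only the $\int_{\mathcal{E}_h^i}[\vect{\tau}_h]\cdot\{u\}\,\mathrm{d}s$ term survives. This surviving term cancels exactly against the edge term in the definition \eqref{equ:blinear_b} of $b_h$, giving $b_h(\vect{\tau}_h, u) = -\int_\Omega \vect{\tau}_h : \vect{\varepsilon}(u)\,\mathrm{d}x$. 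Combined with the formula for $a_h(\vect{\sigma}, \vect{\tau}_h)$ and the symmetry of the contraction, the two pieces cancel and the first identity follows.

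I do not expect a genuine obstacle here; the argument is entirely a consistency verification. The only point demanding care is the bookkeeping of which jump or average vanishes on account of which piece of regularity --- $H({\rm div})$ for $\vect{\sigma}$ giving $[\vect{\sigma}] = 0$, versus $H^1(\Omega;\mathbb{R}^d)$ together with the homogeneous boundary condition for $u$ giving $\llbracket u\rrbracket = \vect{0}$ --- and tracking the signs when \eqref{equ:int-parts} and \eqref{equ:dg-identity} are combined. Once these are handled correctly, both identities drop out and the subtraction yielding \eqref{equ:consistency} is immediate.
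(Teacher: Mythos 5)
Your proposal is correct and follows essentially the same route as the paper's proof: both verify that the exact solution satisfies the discrete equations \eqref{equ:mixed-DG} by using $[\vect{\sigma}]=0$, $\llbracket u\rrbracket = \vect{0}$, the element-wise integration by parts \eqref{equ:int-parts} with the DG identity \eqref{equ:dg-identity}, and the two equations of \eqref{equ:elasticity}, then subtract. Your explicit remark that the homogeneous boundary condition is needed for $\llbracket u\rrbracket=\vect{0}$ on $\mathcal{E}_h^{\partial}$ is a small point of care the paper leaves implicit, but it is not a different argument.
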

\begin{proof}
It can be seen that $[\vect{\sigma}] = 0$ and $\llbracket u \rrbracket
= 0$ on ${\cal E}_h^i$ as $(\vect{\sigma}, u)\in \Sigma \times
H^1(\Omega;\mathbb{R}^d)$.  Therefore, 
$$
\begin{aligned}
a_h(\vect{\sigma}, \vect{\tau}_h) + b_h(\vect{\tau}_h, u) = &
\int_\Omega \mathcal{A} \vect{\sigma} : \vect{\tau}_h \,\mathrm{d}x 
+ \int_\Omega u\cdot {\rm div}_h \vect{\tau}_h \,\mathrm{d}x 
- \int_{{\cal E}_h^i} \{u\}\cdot [\vect{\tau}_h] \,\mathrm{d}s \\
= & \int_\Omega \mathcal{A} \vect{\sigma} : \vect{\tau}_h \,\mathrm{d}x 
- \int_\Omega  \vect{\varepsilon}(u) : \vect{\tau}_h \,\mathrm{d}x +
\int_{{\cal E}_h} \llbracket u \rrbracket : \{\vect{\tau}_h\}
\,\mathrm{d}s \\
 = & \int_\Omega \left( \mathcal{A} \vect{\sigma} -
     \vect{\varepsilon}(u)\right) : \vect{\tau}_h \,\mathrm{d}x = 0. 
\end{aligned}
$$
Hence, we prove the first equality in \eqref{equ:consistency}. On the
other hand,  
$$
b_h(\vect{\sigma}, v_h) = \int_\Omega {\rm div} \vect{\sigma} \cdot v_h
\,\mathrm{d}x - \int_{{\cal E}_h^i} [\vect{\sigma}] \cdot \{v_h\}
\,\mathrm{d}s  
= \int_\Omega {\rm div} \vect{\sigma} \cdot v_h\,\mathrm{d}x =
\int_\Omega f \cdot v_h\,\mathrm{d}x,
$$
which implies the second equality in the lemma. 
\end{proof}

By combining Lemma \ref{lem:consistency} and the well-posedness of
mixed LDG formulation \eqref{equ:mixed-DG}, we have the following a
priori error estimates.

\begin{theorem} \label{thm:priori}
Let $(\vect{\sigma}_h, u_h)$ be the solution of the mixed LDG problem
\eqref{equ:mixed-DG}, and $(\vect{\sigma}, u)\in \Sigma \times
H^1(\Omega;\mathbb{R}^d)$ be the solution of \eqref{equ:elasticity}.
Then,
\begin{equation} \label{equ:cea}
\|\vect{\sigma} - \vect{\sigma}_h\|_{*,\Omega} + \|u  - u_h\|_{0,\Omega} 
\lesssim  \inf_{\vect{\tau}_h \in \vect{\Sigma}_h^{k+1}} 
\|\vect{\sigma} - \vect{\tau}_h\|_{*,\Omega}  
+ \inf_{v_h \in V_h^k} (\|u-v_h\|_{0,\Omega} + h|u-v_h|_{1,\Omega,h}). 
\end{equation}
\end{theorem}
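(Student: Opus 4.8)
The plan is to establish the quasi-optimality \eqref{equ:cea} by the standard Strang--Brezzi argument for well-posed saddle-point problems: reduce the true error to a \emph{discrete} error governed by a perturbed mixed problem whose data are the consistency residuals, then invoke the stability of Theorem \ref{thm:well-posedness} together with the boundedness estimates to control those residuals by best-approximation quantities. Since the bilinear form pair $(a_h,b_h)$ satisfies the two Brezzi conditions uniformly in $h$ (the $K$-ellipticity on $Z_h$ and the inf-sup condition of Lemma \ref{lem:inf-sup}), the only new ingredient is combining these with the Galerkin orthogonality of Lemma \ref{lem:consistency}.

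First I would fix arbitrary $\vect{\tau}_h \in \vect{\Sigma}_h^{k+1}$ and $v_h \in V_h^k$ and introduce the discrete errors $\vect{e}_\sigma := \vect{\sigma}_h - \vect{\tau}_h \in \vect{\Sigma}_h^{k+1}$ and $e_u := u_h - v_h \in V_h^k$. Substituting the splittings $\vect{\sigma}-\vect{\sigma}_h = (\vect{\sigma}-\vect{\tau}_h) - \vect{e}_\sigma$ and $u - u_h = (u - v_h) - e_u$ into the consistency relations \eqref{equ:consistency}, I would find that $(\vect{e}_\sigma, e_u)$ solves the discrete mixed problem
$$a_h(\vect{e}_\sigma, \vect{\mu}_h) + b_h(\vect{\mu}_h, e_u) = a_h(\vect{\sigma}-\vect{\tau}_h, \vect{\mu}_h) + b_h(\vect{\mu}_h, u - v_h) =: F(\vect{\mu}_h), \qquad b_h(\vect{e}_\sigma, w_h) = b_h(\vect{\sigma}-\vect{\tau}_h, w_h) =: G(w_h),$$
for all $\vect{\mu}_h \in \vect{\Sigma}_h^{k+1}$ and $w_h \in V_h^k$. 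Because the discrete operator associated with $(a_h,b_h)$ is, by Theorem \ref{thm:well-posedness}, a uniform isomorphism, the generalized Brezzi stability bound then gives
$$\|\vect{e}_\sigma\|_{*,\Omega} + \|e_u\|_{0,\Omega} \lesssim \sup_{\vect{\mu}_h \in \vect{\Sigma}_h^{k+1}} \frac{F(\vect{\mu}_h)}{\|\vect{\mu}_h\|_{*,\Omega}} + \sup_{w_h \in V_h^k} \frac{G(w_h)}{\|w_h\|_{0,\Omega}},$$
with an $h$-independent constant.

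It remains to bound the two suprema by approximation errors. For the first, I would use the boundedness \eqref{equ:boundedness-a} to estimate $a_h(\vect{\sigma}-\vect{\tau}_h, \vect{\mu}_h) \lesssim \|\vect{\sigma}-\vect{\tau}_h\|_{*,\Omega}\|\vect{\mu}_h\|_{*,\Omega}$ and the boundedness of $b_h$ to estimate $b_h(\vect{\mu}_h, u-v_h) \lesssim \|\vect{\mu}_h\|_{*,\Omega}(\|u-v_h\|_{0,\Omega} + h|u-v_h|_{1,\Omega,h})$; for the second, \eqref{equ:boundedness-b1} yields $G(w_h) \lesssim \|\vect{\sigma}-\vect{\tau}_h\|_{*,\Omega}\|w_h\|_{0,\Omega}$. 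Combining these with the triangle inequalities $\|\vect{\sigma}-\vect{\sigma}_h\|_{*,\Omega} \le \|\vect{\sigma}-\vect{\tau}_h\|_{*,\Omega} + \|\vect{e}_\sigma\|_{*,\Omega}$ and $\|u-u_h\|_{0,\Omega} \le \|u-v_h\|_{0,\Omega} + \|e_u\|_{0,\Omega}$, and finally taking the infimum over $\vect{\tau}_h \in \vect{\Sigma}_h^{k+1}$ and $v_h \in V_h^k$, delivers \eqref{equ:cea}.

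The one point demanding care --- and what I expect to be the main obstacle --- is the term $b_h(\vect{\mu}_h, u-v_h)$. Since $v_h \in V_h^k$ is only piecewise polynomial, $u - v_h$ is \emph{not} globally $H^1$, so estimate \eqref{equ:boundedness-b2} cannot be quoted verbatim, as it is stated for $v \in V \cap H^1(\Omega;\mathbb{R}^d)$. The remedy is to observe that the proof of \eqref{equ:boundedness-b2} uses only the elementwise (scaled) trace inequality against the broken seminorm $|\cdot|_{1,\Omega,h}$ and therefore extends unchanged to any function that is piecewise $H^1$; I would restate that bound for such $w$ (of which $u-v_h$ is an instance) before applying it. Everything else reduces to routine assembly of results already established.
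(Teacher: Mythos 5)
Your proposal is correct and follows essentially the same route as the paper: discrete stability from Theorem \ref{thm:well-posedness} (which the paper phrases via the combined form $\mathcal{L}_h(\cdot,\cdot;\cdot,\cdot)$ and you phrase as Brezzi stability for the perturbed discrete problem with residuals $F$ and $G$ --- the same argument), Galerkin orthogonality from Lemma \ref{lem:consistency}, the boundedness bounds \eqref{equ:boundedness-a}, \eqref{equ:boundedness-b1}, \eqref{equ:boundedness-b2}, and a final triangle inequality. Your closing observation that \eqref{equ:boundedness-b2} must be extended from $V \cap H^1(\Omega;\mathbb{R}^d)$ to piecewise-$H^1$ functions is a subtlety the paper silently glosses over (it applies that bound to $v_h - u$, which is only broken $H^1$), and your proposed fix --- rerunning the elementwise trace-inequality proof against $|\cdot|_{1,\Omega,h}$ --- is exactly right.
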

\begin{proof}
Define 
$$ 
\mathcal{L}_h (\vect{\tau}_h, v_h; \vect{\theta}_h, w_h) = 
a_h(\vect{\tau}_h, \vect{\theta}_h) 
+ b_h(\vect{\theta}_h, v_h) + b_h(\vect{\tau}_h, w_h),
$$
which satisfies discrete inf-sup condition based on the well-posedness
of \eqref{equ:mixed-DG}. In the light of Lemma \ref{lem:consistency} and the boundedness
\eqref{equ:boundedness-a}, \eqref{equ:boundedness-b1} and
\eqref{equ:boundedness-b2}, we have for any $(\vect{\tau}_h, v_h)\in
\vect{\Sigma}_h^{k+1}\times V_h^k$,
$$
\begin{aligned}
\|\vect{\tau}_h - \vect{\sigma}_h\|_{*,\Omega} + \|v_h -
u_h\|_{0,\Omega} 
& \lesssim  \sup_{(\vect{\theta}_h, w_h)\in \vect{\Sigma}_h^{k+1}
\times V_h^k} \frac{\mathcal{L}_h (\vect{\tau}_h - \vect{\sigma}_h, v_h
- u_h; \vect{\theta}_h, w_h)}{ \|\vect{\theta}_h\|_{*,\Omega} +
  \|w_h\|_{0,\Omega}}\\
& = \sup_{(\vect{\theta}_h, w_h)\in \vect{\Sigma}_h^{k+1}
\times V_h^k} \frac{
a_h(\vect{\tau}_h - \vect{\sigma}, \vect{\theta}_h) + b_h(\vect{\theta}_h,
v_h - u) + b_h(\vect{\tau}_h - \vect{\sigma}, w_h)
}{ \|\vect{\theta}_h\|_{*,\Omega} + \|w_h\|_{0,\Omega}} \\
& \lesssim \|\vect{\tau}_h - \vect{\sigma}\|_{*,\Omega} +
\sup_{\vect{\theta}_h\in \vect{\Sigma}_h^{k+1}} \frac{b_h(\vect{\theta}_h, v_h -
u)}{\|\vect{\theta}_h\|_{*,\Omega}} \\
& \lesssim \|\vect{\tau}_h - \vect{\sigma}\|_{*,\Omega} + \|v_h -
u\|_{0,\Omega} + h|v_h - u|_{1,\Omega,h}.
\end{aligned}
$$ 
By triangle inequality, we finish the proof.
\end{proof}

For $(\vect{\sigma},u) \in  H^{k+2}(\Omega;\mathbb{S}) \times
H^{k+1}(\Omega;\mathbb{R}^d)$, it is well-known that the Scott-Zhang
interpolation \cite{scott1990finite} $I_h^r$ satisfies:  
$$ 
\begin{aligned}
|\vect{\sigma}- I_h^r\vect{\sigma}|_{s,\Omega} &\lesssim
h^{r+1-s}|\vect{\sigma}|_{r+1,\Omega} \qquad 0\leq s \leq r+1 \leq
k+2,\\
|u-I_h^r u|_{s,\Omega} &\lesssim  h^{r+1-s}|u|_{r+1,\Omega}
\qquad 0 \leq s \leq r+1 \leq k+1.
\end{aligned}
$$
Hence, we have the following theorem.

\begin{theorem} \label{thm:error-estimate} 
Assume that the solution of \eqref{equ:elasticity} satisfies
$(\vect{\sigma},u)\in H^{k+2}(\Omega;\mathbb{S}) \times
H^{k+1}(\Omega;\mathbb{R}^d)$.  Then, the solution of the mixed LDG
problem \eqref{equ:mixed-DG} satisfies  
\begin{equation} \label{equ:error-estimate}
\|\vect{\sigma} - \vect{\sigma}_h\|_{*,\Omega} + \|u -
u_h\|_{0,\Omega} \lesssim h^{k+1}(|\vect{\sigma}|_{k+2,\Omega} +
|u|_{k+1,\Omega}).
\end{equation}
\end{theorem}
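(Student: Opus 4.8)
The plan is to derive \eqref{equ:error-estimate} as a direct corollary of the quasi-optimality estimate \eqref{equ:cea} in Theorem~\ref{thm:priori}: it suffices to bound the two infima on the right-hand side of \eqref{equ:cea} by inserting concrete interpolants whose approximation properties are exactly the Scott--Zhang bounds quoted just above the theorem. Concretely, I would take $\vect{\tau}_h = I_h^{k+1}\vect{\sigma} \in \vect{\Sigma}_h^{k+1}$ and $v_h = I_h^{k} u \in V_h^k$, noting that the conforming $\mathcal{P}_{k+1}$ (resp. $\mathcal{P}_k$) space sits inside the broken space $\vect{\Sigma}_h^{k+1}$ (resp. $V_h^k$), so these are admissible competitors in the respective infima.

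For the stress contribution I would first observe that the edge-jump part of the star norm \eqref{equ:star-norm} drops out for this choice. Indeed, since $\vect{\sigma} \in H^{k+2}(\Omega;\mathbb{S}) \subset H({\rm div},\Omega;\mathbb{S})$ has no normal jump, $[\vect{\sigma}] = 0$, while the Scott--Zhang interpolant is globally continuous, so $[I_h^{k+1}\vect{\sigma}] = 0$; hence $[\vect{\sigma} - I_h^{k+1}\vect{\sigma}] = 0$ on every interior edge. Consequently the star norm of the interpolation error collapses to
$$
\|\vect{\sigma} - I_h^{k+1}\vect{\sigma}\|_{*,\Omega}^2
= \|\vect{\sigma} - I_h^{k+1}\vect{\sigma}\|_{0,\Omega}^2
+ \|{\rm div}_h(\vect{\sigma} - I_h^{k+1}\vect{\sigma})\|_{0,\Omega}^2 ,
$$
and I would control the two terms by the interpolation estimates with $(r,s) = (k+1,0)$ and $(r,s)=(k+1,1)$, yielding $h^{k+2}|\vect{\sigma}|_{k+2,\Omega}$ and $h^{k+1}|\vect{\sigma}|_{k+2,\Omega}$ respectively (the divergence is dominated by the full first-order seminorm). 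The slower $h^{k+1}$ rate thus governs the stress bound.

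For the displacement I would bound both pieces of $\|u - I_h^{k} u\|_{0,\Omega} + h\,|u - I_h^{k} u|_{1,\Omega,h}$ directly by the interpolation estimates with $(r,s) = (k,0)$ and $(r,s) = (k,1)$, obtaining $h^{k+1}|u|_{k+1,\Omega}$ and $h\cdot h^{k}|u|_{k+1,\Omega} = h^{k+1}|u|_{k+1,\Omega}$. Substituting both families of estimates into \eqref{equ:cea} gives \eqref{equ:error-estimate} immediately. The only point requiring care, rather than a genuine obstacle, is the treatment of this jump term: it is precisely the continuity of the Scott--Zhang interpolant that makes it vanish, so no trace inequality is needed and no power of $h$ is lost. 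Had one instead used an $L^2$ projection, the jump of the interpolant would not vanish and one would have to estimate $h_e^{-1/2}\|[\vect{\sigma} - \Pi_h\vect{\sigma}]\|_{0,e}$ via a trace inequality, which still delivers the optimal $h^{k+1}$ rate but at the cost of extra bookkeeping.
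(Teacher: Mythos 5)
Your proposal is correct and follows exactly the route the paper intends: the paper states the Scott--Zhang interpolation estimates immediately before the theorem and treats \eqref{equ:error-estimate} as a direct consequence of the quasi-optimality bound \eqref{equ:cea}, which is precisely your argument of inserting $I_h^{k+1}\vect{\sigma}$ and $I_h^{k}u$ as competitors. Your additional observation that the jump term in the star norm vanishes because the Scott--Zhang interpolant is conforming (and $[\vect{\sigma}]=0$) is the key point the paper leaves implicit, and you have handled it correctly.
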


\section{$L^2$ error estimate of stress}
\label{sec:L2error}

In this section, we prove the optimal $L^2$ error estimate of
$\vect{\sigma}$ provided that the Stokes pair
$\mathcal{P}_{k+2}-\mathcal{P}_{k+1}^{-1}$ is stable and $k \geq d$. 

First, we recall the definition of classical BDM projection
$\Pi_h^{\rm BDM}$ \cite{brezzi1985two}. Given a function $q \in
H({\rm div}, \Omega; \mathbb{R}^d)$, the restriction of $\Pi_h^{\rm BDM}$ to $K$
is defined as the element of $\mathcal{P}_{k+1}(K;\mathbb{R}^d)$ such
that 
\begin{equation} \label{equ:BDM}
\begin{aligned}
\int_e (\Pi_h^{\rm BDM} q - q) \cdot n p_{k+1} \,\mathrm{d}s &= 0
\qquad \forall p_{k+1} \in \mathcal{P}_{k+1}(e), \\
\int_K (\Pi_h^{\rm BDM}q-q) \cdot \nabla p_{k} \,\mathrm{d}x&= 0
\qquad \forall p_{k} \in \mathcal{P}_{k}(K), \\ 
\int_K  (\Pi_h^{\rm BDM} q-q) \cdot p_{k+1}\,\mathrm{d}x&= 0 \qquad
\forall p_{k+1} \in \Phi_{k+1}(K),
\end{aligned}
\end{equation}
where 
$$ 
\Phi_{k+1}(K) = \{v\in \mathcal{P}_{k+1}(K;\mathbb{R}^d):~
  \mathrm{div}v = 0, v\cdot n|_{\partial K} = 0\}.
$$ 
Let $\mathbb{M}$ be the space of real matrices of order $d\times d$.
In light of the BDM projection \eqref{equ:BDM}, on each $K\in
\mathcal{T}_h$, we first define a matrix-valued function
$\widetilde{\vect{\sigma}}_h$ as the only element of
$\mathcal{P}_{k+1}(K;\mathbb{M})$ through the numerical solution
$\vect{\sigma}_h$ and $\widehat{\vect{\sigma}}_h$ in
\eqref{equ:numerical-flux}: 
\begin{equation} \label{equ:row-BDM}
\begin{aligned}
\int_e (\widetilde{\vect{\sigma}}_h - \widehat{\vect \sigma}_h)n \cdot
p_{k+1} \,\mathrm{d}s&= 0
\qquad \forall p_{k+1} \in \mathcal{P}_{k+1}(e;\mathbb{R}^d), \\
\int_K (\widetilde{\vect{\sigma}}_h - \vect{\sigma}_h) : \vect{\nabla}
p_{k} \,\mathrm{d}x&= 0 \qquad \forall p_{k} \in
\mathcal{P}_{k}(K;\mathbb{R}^d), \\ 
\int_K  (\widetilde{\vect{\sigma}}_h-\vect{\sigma}_h) : \vect{p}_{k+1}
\,\mathrm{d}x &= 0 \qquad \forall \vect{p}_{k+1} \in
\vect{\Phi}_{k+1}(K),
\end{aligned}
\end{equation}
where 
$$ 
\vect{\Phi}_{k+1}(K) = \{\vect{\tau}\in \mathcal{P}_{k+1}(K;\mathbb{M}):~
  \mathrm{div}\vect{\tau} = 0, \vect{\tau} n|_{\partial K} = 0\}.
$$ 
Here, the $\vect{\nabla}$ is regarded as the row-wise operator, i.e., 
$$ 
\vect{\nabla}p = \begin{pmatrix}
(\nabla p_1)^t \\ 
\vdots \\
(\nabla p_d)^t
\end{pmatrix}, 
\qquad p = (p_1, \cdots, p_d)^t.
$$ 
Define the following space 
$$ 
{\rm BDM}_{k+1}^{d\times d} := \{\vect{\tau}\in H({\rm
    div},\Omega;\mathbb{M}): ~ \vect{\tau}|_K\in
\mathcal{P}_{k+1}(K;\mathbb{M})~~\forall K \in \mathcal{T}_h\}.
$$ 
Then, we have the following lemma. 
\begin{lemma} \label{lem:tilde-sigma}
The $\widetilde{\vect{\sigma}}_h$ in \eqref{equ:row-BDM} is
well-defined, and 
\begin{subequations} 
\begin{align}
\widetilde{\vect{\sigma}}_h &\in {\rm BDM}_{k+1}^{d\times d},
  \label{equ:div-conforming} \\
\|\widetilde{\vect{\sigma}}_h - \vect{\sigma}_h\|_{L^2(K)} &\lesssim
h_K^{1/2}\|(\widehat{\vect \sigma}_h -
\vect{\sigma}_h)n\|_{L^2(\partial K)}.
\label{equ:row-estimate}
\end{align}
\end{subequations} 
\end{lemma}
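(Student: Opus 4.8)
The plan is to read \eqref{equ:row-BDM} as the classical Brezzi--Douglas--Marini degrees of freedom applied row by row. Each row of $\widetilde{\vect{\sigma}}_h$ is a vector field in $\mathcal{P}_{k+1}(K;\mathbb{R}^d)$, and the three groups of conditions---the normal moments on the edges against $\mathcal{P}_{k+1}(e)$, the interior moments against $\vect{\nabla}\mathcal{P}_k(K)$, and the interior moments against $\vect{\Phi}_{k+1}(K)$---are precisely the unisolvent set of degrees of freedom defining the BDM space of index $k+1$ (compare \eqref{equ:BDM}). Since these degrees of freedom are unisolvent on $\mathcal{P}_{k+1}(K;\mathbb{R}^d)$, the prescribed data (coming from $\widehat{\vect{\sigma}}_h$ on $\partial K$ and from $\vect{\sigma}_h$ in the interior) determine $\widetilde{\vect{\sigma}}_h$ uniquely, which settles well-definedness.

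For the $H({\rm div})$-conformity \eqref{equ:div-conforming}, I would look at the normal trace across an interior edge $e=\partial K^+\cap\partial K^-$. The first group of conditions in \eqref{equ:row-BDM} forces $(\widetilde{\vect{\sigma}}_h-\widehat{\vect{\sigma}}_h)n$ to be $L^2(e)$-orthogonal to all of $\mathcal{P}_{k+1}(e;\mathbb{R}^d)$; but this difference is itself a polynomial of degree at most $k+1$ on $e$ (because $\widehat{\vect{\sigma}}_h=\{\vect{\sigma}_h\}$ has polynomial normal trace), so it vanishes identically. Thus $\widetilde{\vect{\sigma}}_h^\pm n^\pm=\widehat{\vect{\sigma}}_h n^\pm$ on $e$, and since $\widehat{\vect{\sigma}}_h$ is single-valued, summing the two traces and using $n^++n^-=\vect{0}$ gives $[\widetilde{\vect{\sigma}}_h]=\vect{0}$ on every interior edge. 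Combined with $\widetilde{\vect{\sigma}}_h|_K\in\mathcal{P}_{k+1}(K;\mathbb{M})$, this yields $\widetilde{\vect{\sigma}}_h\in{\rm BDM}_{k+1}^{d\times d}$.

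For the estimate \eqref{equ:row-estimate}, set $\vect{\delta}:=\widetilde{\vect{\sigma}}_h-\vect{\sigma}_h\in\mathcal{P}_{k+1}(K;\mathbb{M})$. The last two groups of conditions in \eqref{equ:row-BDM} say that all interior BDM moments of $\vect{\delta}$ vanish, while the trace computation above gives $\vect{\delta}n|_{\partial K}=(\widehat{\vect{\sigma}}_h-\vect{\sigma}_h)n|_{\partial K}$ exactly. It therefore suffices to prove $\|\vect{\delta}\|_{L^2(K)}\lesssim h_K^{1/2}\|\vect{\delta}n\|_{L^2(\partial K)}$. I would do this by scaling to a fixed reference element via the contravariant Piola transform, under which the divergence-free interior conditions and the normal-moment structure are preserved, so that the pullback lies in the finite-dimensional subspace $W$ of $\mathcal{P}_{k+1}(\hat K;\mathbb{M})$ on which all interior moments vanish. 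On $W$ the map $\hat{\vect{\tau}}\mapsto\|\hat{\vect{\tau}}\hat n\|_{L^2(\partial\hat K)}$ is a norm---if it vanished, all BDM degrees of freedom of $\hat{\vect{\tau}}$ would vanish and unisolvence would force $\hat{\vect{\tau}}=\vect{0}$---so by equivalence of norms on the finite-dimensional space $W$ one obtains $\|\hat{\vect{\delta}}\|_{L^2(\hat K)}\lesssim\|\hat{\vect{\delta}}\hat n\|_{L^2(\partial\hat K)}$. Transforming back and collecting the powers of $h_K$ (the element measure scaling like $h_K^{d}$ and the facet measure like $h_K^{d-1}$) produces the factor $h_K^{1/2}$ and hence \eqref{equ:row-estimate}.

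The routine parts are the unisolvence bookkeeping and the trace cancellation; the step that needs genuine care is the scaling argument in the last paragraph. The subtlety is that the norm equivalence must be applied on a reference space that still encodes ``interior moments vanish'' after the change of variables, and that the normal-trace term must scale with exactly the power $h_K^{d-1}$; this is why the contravariant Piola transform (rather than a plain affine pullback) is the right vehicle, since it keeps both the divergence-free constraint and the normal moments in their original form. The whole estimate thus rests on the classical unisolvence of the BDM element, now invoked row-wise.
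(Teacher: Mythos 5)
Your proposal is correct and follows essentially the same route as the paper: it reads \eqref{equ:row-BDM} as a row-wise BDM projection, deduces well-posedness and \eqref{equ:div-conforming} from BDM unisolvence together with the single-valuedness of $\widehat{\vect{\sigma}}_h n$, and obtains \eqref{equ:row-estimate} by the standard scaling argument. The only difference is one of detail: the paper compresses the trace cancellation and the Piola-transform/norm-equivalence step into a citation of \cite{boffi2013mixed}, whereas you write them out explicitly (and correctly, including the $h_K^{d}$ versus $h_K^{d-1}$ bookkeeping that yields the factor $h_K^{1/2}$).
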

\begin{proof}
Since \eqref{equ:row-BDM} can be viewed as the row-wise BDM
projection, then the well-posedness and \eqref{equ:div-conforming}
follows directly by the definition of $\Pi_h^{\rm BDM}$, and by the
fact that the normal component of the numerical trace for the flux is
single-valued. Let $\vect{\delta} = \widetilde{\vect{\sigma}}_h -
\vect{\sigma}_h$, then 
$$
\begin{aligned}
\int_e \vect{\delta}n \cdot p_{k+1}\,\mathrm{d}s &= \int_e
(\widehat{\vect{\sigma}}_h - \vect{\sigma}_h
)n\cdot p_{k+1} \,\mathrm{d}s\qquad \forall p_{k+1} \in
\mathcal{P}_{k+1}(e;\mathbb{R}^d), \\
\int_K \vect{\delta} : \vect{\nabla} p_{k} \,\mathrm{d}x &= 0
~\qquad\qquad\qquad ~~~\qquad\qquad\forall p_{k} \in
\mathcal{P}_{k}(K;\mathbb{R}^d), \\ 
\int_K  \vect{\delta} : \vect{p}_{k+1}\,\mathrm{d}x & = 0 ~~~~\qquad\qquad\qquad\qquad\qquad
\forall \vect{p}_{k+1} \in \vect{\Phi}_{k+1}(K).
\end{aligned}
$$
Then, \eqref{equ:row-estimate} follows easily by the standard scaling
argument; see \cite{boffi2013mixed}.
\end{proof}

Next, we symmetrize $\widetilde{\vect{\sigma}}_h$ by the Stokes pair
$\mathcal{P}_{k+2}- \mathcal{P}_{k+1}^{-1}$.  A similar technique can
be found in \cite{falk2008finite, gopalakrishnan2012second,
gong2018new}.
\begin{lemma} \label{lem:symmetrize}
Suppose that the Stokes pair
$\mathcal{P}_{k+2}-\mathcal{P}_{k+1}^{-1}$ is stable on the grid
$\mathcal{T}_h$. Having $\widetilde{\vect{\sigma}}_h$ defined in
\eqref{equ:row-BDM}, there exists a matrix-valued function
$\widetilde{\vect{\tau}}_h \in {\rm BDM}_{k+1}^{d\times d}$ such that
$\vect{\sigma}_h^\star := \widetilde{\vect{\sigma}}_h +
\widetilde{\vect{\tau}}_h \in H({\rm div},\Omega;\mathbb{S})$, and  
\begin{equation} \label{equ:symmetrize}
\mathrm{div}\widetilde{\vect{\tau}}_h = 0 \quad \text{and} \quad
\|\widetilde{\vect{\tau}}_h\|_{0,\Omega} \lesssim \|\vect{\sigma}_h -
\widetilde{\vect{\sigma}}_h \|_{0,\Omega}.
\end{equation} 
\end{lemma}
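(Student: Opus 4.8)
The plan is to realize the corrector $\widetilde{\vect{\tau}}_h$ as a curl-type derivative of a potential delivered by the stable Stokes pair, so that divergence-freeness and $H(\mathrm{div})$-conformity come for free and only the skew part has to be fitted. Since both $\widetilde{\vect{\sigma}}_h$ and any $\widetilde{\vect{\tau}}_h\in{\rm BDM}_{k+1}^{d\times d}$ already lie in $H(\mathrm{div},\Omega;\mathbb{M})$, the sum $\vect{\sigma}_h^\star$ is automatically in $H(\mathrm{div})$; hence the genuine requirements are that $\widetilde{\vect{\tau}}_h$ be divergence-free, that $\mathrm{skw}(\widetilde{\vect{\tau}}_h)=-\mathrm{skw}(\widetilde{\vect{\sigma}}_h)$ so that $\vect{\sigma}_h^\star$ is symmetric, and that the bound \eqref{equ:symmetrize} hold. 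Note that only the skew parts matter for symmetry, so the symmetric part of $\widetilde{\vect{\tau}}_h$ may be left free. Because $\vect{\sigma}_h$ is symmetric, $\mathrm{skw}(\widetilde{\vect{\sigma}}_h)=\mathrm{skw}(\widetilde{\vect{\sigma}}_h-\vect{\sigma}_h)$, so the skew datum to be cancelled is already controlled by $\|\widetilde{\vect{\sigma}}_h-\vect{\sigma}_h\|_{0,\Omega}$, which is exactly the right-hand side of \eqref{equ:symmetrize}.

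I would first check the compatibility condition needed to invert a divergence. The middle group of moment equations in \eqref{equ:row-BDM} gives $\int_K(\widetilde{\vect{\sigma}}_h-\vect{\sigma}_h):\vect{\nabla}p_k\,\mathrm{d}x=0$ for every $p_k\in\mathcal{P}_k(K;\mathbb{R}^d)$; taking $p_k$ linear makes $\vect{\nabla}p_k$ range over all constant matrices, so $\int_K(\widetilde{\vect{\sigma}}_h-\vect{\sigma}_h)\,\mathrm{d}x=0$ on each element (this uses only $k\ge1$, which holds since $k\ge d$). In particular $\mathrm{skw}(\widetilde{\vect{\sigma}}_h)$ has zero mean on every $K$, hence on $\Omega$, which is precisely the solvability condition for the divergence equation below with a potential vanishing on $\partial\Omega$.

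For the construction itself, consider first $d=2$ and set $\widetilde{\vect{\tau}}_h=\mathrm{curl}\,\vect{\psi}$, the row-wise rotated gradient of a vector potential $\vect{\psi}$ in the Stokes velocity space $[\mathcal{P}_{k+2}\cap H^1_0(\Omega)]^2$. Such a field is piecewise $\mathcal{P}_{k+1}$, is divergence-free row by row, and lies in ${\rm BDM}_{k+1}^{2\times2}$ because the normal component of a row-wise curl equals a tangential derivative of the (continuous) potential and is therefore single-valued across interior edges; moreover its scalar skew component equals $-\tfrac12\mathrm{div}\,\vect{\psi}$. It thus suffices to solve $\mathrm{div}\,\vect{\psi}=2\,\mathrm{skw}(\widetilde{\vect{\sigma}}_h)$, whose right-hand side is a mean-zero element of $\mathcal{P}_{k+1}^{-1}$; the stability of the $\mathcal{P}_{k+2}-\mathcal{P}_{k+1}^{-1}$ Stokes pair yields such a $\vect{\psi}$ with $\|\vect{\psi}\|_{1,\Omega}\lesssim\|\mathrm{skw}(\widetilde{\vect{\sigma}}_h)\|_{0,\Omega}$. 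Then $\|\widetilde{\vect{\tau}}_h\|_{0,\Omega}\le|\vect{\psi}|_{1,\Omega}\lesssim\|\mathrm{skw}(\widetilde{\vect{\sigma}}_h-\vect{\sigma}_h)\|_{0,\Omega}\lesssim\|\vect{\sigma}_h-\widetilde{\vect{\sigma}}_h\|_{0,\Omega}$, which is \eqref{equ:symmetrize}. For $d=3$ the same recipe applies with $\vect{\psi}$ a matrix potential in $[\mathcal{P}_{k+2}\cap H^1_0]^{3\times3}$ and $\mathrm{curl}$ taken row-wise: such a curl is again divergence-free and $H(\mathrm{div})$-conforming, while $\mathrm{skw}\,\mathrm{curl}$ reduces to a first-order operator of divergence type on $\vect{\psi}$, so the same Stokes inf-sup furnishes the potential and the bound.

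The delicate point, and the reason the Stokes hypothesis is invoked, is that three exact constraints must hold simultaneously---membership in ${\rm BDM}_{k+1}^{d\times d}$, exact divergence-freeness, and the prescribed skew part---whereas the only quantitative tool available is the Stokes inf-sup, which controls a divergence. The curl ansatz is what reconciles them: differentiating a conforming potential builds in the first two constraints, collapsing the skew requirement to a single divergence equation to which the stable pair applies. The main technical content is therefore verifying the $H(\mathrm{div})$-conformity of the row-wise curl and pinning down the precise identity expressing $\mathrm{skw}\,\mathrm{curl}$ as a divergence in the general $d$ case (via the associated elasticity/Stokes complex); the remaining inequalities are routine applications of the inverse and trace estimates already used above.
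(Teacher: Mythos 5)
Your proposal is correct and follows essentially the same route as the paper: set $\widetilde{\vect{\tau}}_h = \vect{\rm curl}\,\rho_h$ with $\rho_h$ a piecewise $\mathcal{P}_{k+2}$, $H^1$-conforming Stokes velocity potential, reduce the symmetry constraint to a divergence equation solved via the Stokes inf-sup condition, and close the bound using the symmetry of $\vect{\sigma}_h$ so that $\vect{\rm skw}\,\widetilde{\vect{\sigma}}_h$ is controlled by $\|\vect{\sigma}_h - \widetilde{\vect{\sigma}}_h\|_{0,\Omega}$. The only substantive differences are that the paper pins down the 3D identity you defer, namely $\vect{\rm skw}(\vect{\rm curl}\,\rho_h) = -\tfrac{1}{2}\vect{\rm Skw}_3({\rm div}\,\Xi\rho_h)$ with $\Xi\rho_h = \rho_h^T - {\rm tr}(\rho_h)\vect{I}$, so that the divergence equation is solved for $\eta_h = \Xi\rho_h$ and then $\rho_h = \Xi^{-1}\eta_h$, whereas your extra mean-zero compatibility check (and the attendant $k\geq 1$ requirement) is needed only because you impose the $H^1_0$ boundary condition on the potential, which the paper's construction avoids.
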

\begin{proof}
We construct a divergence-free term $\widetilde{\vect{\tau}}_h =
\vect{\rm curl} \rho_h$ where $\rho_h$ satisfies
\begin{enumerate}
\item For $d=2$: $\rho_h \in H^1(\Omega;\mathbb{R}^2)$ is a vector-valued
function and $\rho_h|_K\in \mathcal{P}_{k+2}(K;\mathbb{R}^2) $;
\item For $d=3$: $\rho_h \in H^1(\Omega; \mathbb{M})$ is a matrix-valued
function and $\rho_h|_K\in \mathcal{P}_{k+2}(K;\mathbb{M}) $.
\end{enumerate}
For the 2D case, the $\mathrm{curl} $ operator is a rotation of the
operator $\nabla$ (i.e., $\mathrm{curl} =(- \partial_y, \partial_x)$)
and applies on each entry of the vector $\rho_h$. For the 3D case, the
$\mathrm{curl}$ operator applies on each row of the matrix $\rho_h$.
By direct calculation, the symmetry of $\widetilde{\vect{\sigma}}_h +
\widetilde{\vect{\tau}}_h$ is equivalent to the following equation, 
\begin{equation}\label{eq:infsup-low1}
\vect{\rm skw}(\vect{\rm curl} \rho_h) = - \vect{\rm
  skw}\widetilde{\vect{\sigma}}_h,
\end{equation}
where $\vect{\rm skw}\vect{\tau} := (\vect{\tau}-\vect{\tau}^T)/2 $.
For a scalar function $v$ or a vector-valued function
$v=(v_1,v_2,v_3)^T$, we further define 
\[
\vect{\rm Skw}_2(v) := \begin{bmatrix} 0 & v\\ -v &0\end{bmatrix} \quad
\text{and} \quad
\vect{\rm Skw}_3(v) := \begin{bmatrix} 0 & v_3 & - v_2 \\ -v_3 & 0 &
v_1 \\ v_2 & -v_1 & 0\\\end{bmatrix}.
\]
Then, the proof can be divided into the following two cases: 
\begin{enumerate}
\item For $n=2$: from \cite{arnold2006finite}, we have $\vect{\rm skw}
(\vect{\rm curl} \rho_h) = \frac{1}{2} \vect{\rm Skw}_2(\mathrm{div}
\rho_h)$. Thus, \eqref{eq:infsup-low1} can be written as:
\begin{equation}\label{eq:infsup-low2D}
\mathrm{div} \rho_h = \widetilde{\sigma}_{h,21} -
\widetilde{\sigma}_{h,12}.
\end{equation}
The stability of Stokes pair
$\mathcal{P}_{k+2}-\mathcal{P}_{k+1}^{-1}$ then implies that there
exists a $\rho_h \in \{ v\in H^1(\Omega;\mathbb{R}^2): ~ v|_K \in
\mathcal{P}_{k+2}(K;\mathbb{R}^2)\}$ satisfying
\eqref{eq:infsup-low2D} and 
$$
\|\rho_h\|_{1,\Omega} \lesssim \|\widetilde{\sigma}_{h,21} -
\widetilde{\sigma}_{h,12}\|_{0,\Omega}
\leq  \| \widetilde{\sigma}_{h,21}  - \sigma_{h,21}\|_{0,\Omega} + \|
\widetilde{\sigma}_{h,12} - \sigma_{h,12}\|_{0,\Omega} \leq
\|\vect{\sigma}_h - \widetilde{\vect{\sigma}}_h\|_{0,\Omega}.
$$

\item For $n=3$: from \cite{arnold2006finite}, we have  
$\vect{\rm skw}(\vect{\rm curl} \rho_h) = -\frac{1}{2}\vect{\rm Skw}_3
(\mathrm{div} ~ \Xi
\rho_h)$, where $\Xi$ is an algebraic operator defined as ${\Xi}\rho_h
= \rho_h^T - \mathrm{tr}(\rho_h) \vect{I}$. Denoting $\eta_h = \Xi
\rho_h$, it is obvious that $\rho_h = \Xi^{-1} \eta_h =  \eta_h^T -
\frac{1}{2}\mathrm{tr}(\eta_h) \vect{I}$. Thus, \eqref{eq:infsup-low1}
can be written as:  
\begin{equation}\label{eq:infsup-low3D}
\mathrm{div} \eta_h = (\widetilde{\sigma}_{h,23}-\widetilde{\sigma}_{h,32},
\widetilde{\sigma}_{h,31}- \widetilde{\sigma}_{h,13},
\widetilde{\sigma}_{h,12}- \widetilde{\sigma}_{h,21})^{T}.
\end{equation}
Again, there exists a $\eta_h \in  \{\tau\in H^1(\Omega;\mathbb{M}):
  ~\tau|_K \in \mathcal{P}_{k+2}(K;\mathbb{M})\}$ satisfying
  \eqref{eq:infsup-low3D} and 
\begin{equation*}
\|\rho_h \|_{1,\Omega} \lesssim \|\eta_h\|_{1,\Omega} \lesssim
\|(\widetilde{\sigma}_{h,23}-\widetilde{\sigma}_{h,32},
\widetilde{\sigma}_{h,31}- \widetilde{\sigma}_{h,13},
\widetilde{\sigma}_{h,12}- \widetilde{\sigma}_{h,21})^{T} \|_{0,\Omega} \lesssim
\|\vect{\sigma}_h - \widetilde{\vect{\sigma}}_h\|_{0,\Omega}.
\end{equation*}
\end{enumerate}
To summarize, we obtain the desired $\widetilde{\vect{\tau}}_h = \vect{\rm
curl} \rho_h$ that satisfies \eqref{equ:symmetrize}.  This completes
the proof. 
\end{proof}

We are now in the position to prove the optimal $L^2$ error estimate. 
\begin{theorem} \label{thm:L2-stress} 
Assume that the Stokes pair
$\mathcal{P}_{k+2}-\mathcal{P}_{k+1}^{-1}$ is stable on
$\mathcal{T}_h$ and $k \geq d$. Assume further that the solution of
\eqref{equ:elasticity} satisfies $(\vect{\sigma},u)\in
H^{k+2}(\Omega;\mathbb{S}) \times H^{k+1}(\Omega;\mathbb{R}^d)$.
Then, the solution of the mixed LDG problem \eqref{equ:mixed-DG}
satisfies  
\begin{equation} \label{equ:sigma-l2}
\|\vect{\sigma} - \vect{\sigma}_h\|_{\mathcal{A},\Omega} \lesssim
h^{k+2}(|\vect{\sigma}|_{k+2,\Omega} + |u|_{k+1,\Omega}),
\end{equation}
where $\|\vect{\sigma}\|_{\mathcal{A},\Omega}^2 :=
(\mathcal{A}\vect{\sigma}, \vect{\sigma})_{\Omega}^{1/2}$.
\end{theorem}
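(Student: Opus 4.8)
The plan is to estimate the error through the symmetric, $H({\rm div})$-conforming reconstruction $\vect{\sigma}_h^\star = \widetilde{\vect{\sigma}}_h + \widetilde{\vect{\tau}}_h$ produced by Lemmas~\ref{lem:tilde-sigma} and~\ref{lem:symmetrize}, and to bound $\vect{\sigma}-\vect{\sigma}_h^\star$ by an Aubin--Nitsche duality argument. First I would split
\[
\|\vect{\sigma}-\vect{\sigma}_h\|_{\mathcal{A},\Omega} \le \|\vect{\sigma}-\vect{\sigma}_h^\star\|_{\mathcal{A},\Omega} + \|\vect{\sigma}_h^\star-\vect{\sigma}_h\|_{\mathcal{A},\Omega}.
\]
The second term is harmless: combining Lemma~\ref{lem:symmetrize} with Lemma~\ref{lem:tilde-sigma} and $\widehat{\vect{\sigma}}_h=\{\vect{\sigma}_h\}$ gives $\|\vect{\sigma}_h^\star-\vect{\sigma}_h\|_{0,\Omega}^2\lesssim\sum_K h_K\|(\widehat{\vect{\sigma}}_h-\vect{\sigma}_h)n\|_{0,\partial K}^2\lesssim h^2\sum_{e\in{\cal E}_h^i}h_e^{-1}\|[\vect{\sigma}_h]\|_{0,e}^2$, and since $[\vect{\sigma}]=0$ this is $\lesssim h^2\|\vect{\sigma}-\vect{\sigma}_h\|_{*,\Omega}^2 = O(h^{2(k+2)})$ by Theorem~\ref{thm:error-estimate}. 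As $\|\cdot\|_{\mathcal{A},\Omega}\simeq\|\cdot\|_{0,\Omega}$, everything then reduces to the conforming error $\|\vect{\sigma}-\vect{\sigma}_h^\star\|_{\mathcal{A},\Omega}$.

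The decisive structural fact I would establish next is the commuting identity ${\rm div}\,\vect{\sigma}_h^\star = P_h\,{\rm div}\,\vect{\sigma}$, where $P_h$ denotes the $L^2$-projection onto $V_h^k$. Since ${\rm div}\,\widetilde{\vect{\tau}}_h=0$, it suffices to compute ${\rm div}\,\widetilde{\vect{\sigma}}_h$. Testing the defining relations~\eqref{equ:row-BDM} against an arbitrary $v_h\in V_h^k$ and integrating by parts elementwise yields $\int_K{\rm div}\,\widetilde{\vect{\sigma}}_h\cdot v_h = -\int_K\vect{\sigma}_h:\vect{\nabla}v_h + \int_{\partial K}(\widehat{\vect{\sigma}}_h n)\cdot v_h$ (the first and second moment conditions let me replace $\widetilde{\vect{\sigma}}_h$ by $\widehat{\vect{\sigma}}_h$ on $\partial K$ and by $\vect{\sigma}_h$ in the volume term). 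Summing over $K$ and using the symmetry of $\vect{\sigma}_h$, so that $\vect{\sigma}_h:\vect{\nabla}v_h=\vect{\sigma}_h:\vect{\varepsilon}(v_h)$, this is precisely the second equation of the scheme~\eqref{equ:weak-form-sum}, i.e. $\int_\Omega{\rm div}\,\widetilde{\vect{\sigma}}_h\cdot v_h = \int_\Omega f\cdot v_h$ for all $v_h$. As ${\rm div}\,\widetilde{\vect{\sigma}}_h|_K\in\mathcal{P}_k(K)$, this identifies ${\rm div}\,\vect{\sigma}_h^\star = P_h f = P_h\,{\rm div}\,\vect{\sigma}$.

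For the duality step, writing $e_\sigma := \vect{\sigma}-\vect{\sigma}_h^\star\in H({\rm div};\mathbb{S})$, I would introduce the dual elasticity problem $\mathcal{A}\vect{\phi}-\vect{\varepsilon}(w)=\mathcal{A}e_\sigma$, ${\rm div}\,\vect{\phi}=0$, $w|_{\partial\Omega}=0$, with elliptic-regularity bound $\|\vect{\phi}\|_{1,\Omega}+\|w\|_{2,\Omega}\lesssim\|e_\sigma\|_{0,\Omega}$. Then $\|e_\sigma\|_{\mathcal{A},\Omega}^2=(\mathcal{A}\vect{\phi},e_\sigma)-(\vect{\varepsilon}(w),e_\sigma)$. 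The second term integrates by parts, using ${\rm div}\,e_\sigma=(I-P_h)f$ and $w\in H^1_0$, to $((I-P_h)f,(I-P_h)w)\lesssim h^{k+1}|\vect{\sigma}|_{k+2,\Omega}\cdot h\|w\|_{2,\Omega}=O(h^{k+2}\|e_\sigma\|_{0,\Omega})$, the extra power coming from the orthogonality to $V_h^k$. In the first term $(\mathcal{A}\vect{\sigma},\vect{\phi})=(\vect{\varepsilon}(u),\vect{\phi})=-({\rm div}\,\vect{\phi},u)=0$, so, absorbing the $O(h^{k+2})$ contributions of $\widetilde{\vect{\sigma}}_h-\vect{\sigma}_h$ and $\widetilde{\vect{\tau}}_h$, it reduces to $-(\mathcal{A}\vect{\sigma}_h,\vect{\phi})$.

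The crux, and the step I expect to be the main obstacle, is bounding $(\mathcal{A}\vect{\sigma}_h,\vect{\phi})$ by inserting the discrete relation $a_h(\vect{\sigma}_h,\vect{\tau}_h)+b_h(\vect{\tau}_h,u_h)=0$. Here I would take $\vect{\tau}_h$ to be a conforming interpolant of $\vect{\phi}$ lying in $\vect{\Sigma}_h^{k+1}\cap H({\rm div};\mathbb{S})$ with $[\vect{\tau}_h]=0$ and the commuting property ${\rm div}\,\vect{\tau}_h=P_h\,{\rm div}\,\vect{\phi}=0$, namely the Hu--Zhang canonical interpolant, whose existence requires a conforming symmetric $\mathcal{P}_{k+1}$ stress element and hence exactly the hypothesis $k\ge d$. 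With $[\vect{\tau}_h]=0$ the penalty contribution in $a_h$ drops out and all interface terms vanish; the consistency term $(\mathcal{A}\vect{\sigma},\vect{\phi}-\vect{\tau}_h)$ reduces after elementwise integration by parts to $\int_\Omega u\cdot{\rm div}\,\vect{\tau}_h$, which together with $-b_h(\vect{\tau}_h,u_h)=-\int_\Omega u_h\cdot{\rm div}\,\vect{\tau}_h$ collapses to $\int_\Omega(u-u_h)\cdot{\rm div}\,\vect{\tau}_h=0$ precisely because ${\rm div}\,\vect{\tau}_h=0$. It is this cancellation, driven by the commuting property, that upgrades the otherwise $O(h^{k+1})$ displacement-error contribution into a full extra order; without it the argument would stall at $h^{k+1}$. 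The only survivor, $(\mathcal{A}(\vect{\sigma}_h-\vect{\sigma}),\vect{\phi}-\vect{\tau}_h)\lesssim h^{k+1}\cdot h|\vect{\phi}|_{1,\Omega}$, is again $O(h^{k+2}\|e_\sigma\|_{0,\Omega})$. Collecting the pieces gives $\|e_\sigma\|_{\mathcal{A},\Omega}^2\lesssim h^{k+2}(|\vect{\sigma}|_{k+2,\Omega}+|u|_{k+1,\Omega})\|e_\sigma\|_{0,\Omega}$, and dividing by $\|e_\sigma\|_{0,\Omega}\lesssim\|e_\sigma\|_{\mathcal{A},\Omega}$ yields~\eqref{equ:sigma-l2}.
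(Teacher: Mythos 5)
Your first two steps reproduce the paper's own argument exactly: the splitting through the symmetrized reconstruction $\vect{\sigma}_h^\star=\widetilde{\vect{\sigma}}_h+\widetilde{\vect{\tau}}_h$, the bound $\|\vect{\sigma}_h^\star-\vect{\sigma}_h\|_{0,\Omega}\lesssim h^{1/2}\|[\vect{\sigma}_h]\|_{\mathcal{E}_h^i}\lesssim h\|\vect{\sigma}-\vect{\sigma}_h\|_{*,\Omega}$, and the identity $({\rm div}\,\vect{\sigma}_h^\star,v_h)_\Omega=(f,v_h)_\Omega$, i.e.\ ${\rm div}\,\vect{\sigma}_h^\star=P_hf$. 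The divergence occurs where you replace the paper's projection argument by an Aubin--Nitsche duality, and that step contains a genuine gap. Your dual problem $\mathcal{A}\vect{\phi}-\vect{\varepsilon}(w)=\mathcal{A}e_\sigma$, ${\rm div}\,\vect{\phi}=0$, $w|_{\partial\Omega}=0$ has the explicit solution $\vect{\phi}=e_\sigma+\mathcal{A}^{-1}\vect{\varepsilon}(w)$, so $\vect{\phi}$ contains the error field $e_\sigma$ itself as a summand. Since $\vect{\sigma}_h^\star$ is a piecewise polynomial that is only normal-continuous across faces, $e_\sigma\notin H^1(\Omega;\mathbb{S})$, hence $\vect{\phi}\notin H^1(\Omega;\mathbb{S})$ and the claimed regularity $\|\vect{\phi}\|_{1,\Omega}\lesssim\|e_\sigma\|_{0,\Omega}$ is false; no convexity or elliptic-regularity hypothesis can rescue it, because the rough datum enters the constitutive equation rather than the divergence equation. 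This kills exactly the step you identified as the crux: the Hu--Zhang interpolant $\vect{\tau}_h$ of $\vect{\phi}$ is not even well defined for a field with only $H({\rm div})$ regularity, and the bound $\|\vect{\phi}-\vect{\tau}_h\|_{0,\Omega}\lesssim h|\vect{\phi}|_{1,\Omega}$ is vacuous since $|\vect{\phi}|_{1,\Omega}=\infty$. Note also that with this particular dual problem the identity $\|e_\sigma\|_{\mathcal{A},\Omega}^2=(\mathcal{A}\vect{\phi},e_\sigma)-(\vect{\varepsilon}(w),e_\sigma)$ is a tautology, so all of the gain must come from the discrete equations --- which is precisely where the missing regularity is needed; without it the argument stalls at $O(h^{k+1})$, as you anticipated.

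The repair is to drop duality altogether and apply the Hu--Zhang operator to the \emph{exact} stress rather than to a dual solution: this is what the paper does. With $\Pi_h^c$ satisfying \eqref{equ:Pic-1}--\eqref{equ:Pic-2} (available since $k\ge d$), the identity ${\rm div}\,\vect{\sigma}_h^\star=P_hf$ gives $({\rm div}(\vect{\sigma}_h^\star-\Pi_h^c\vect{\sigma}),v_h)_\Omega=0$ for all $v_h\in V_h$, so $\vect{\tau}_h:=\vect{\sigma}_h^\star-\Pi_h^c\vect{\sigma}$ is a symmetric, $H({\rm div})$-conforming, divergence-free member of $\vect{\Sigma}_h^{k+1}$. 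Inserting it into the error equation \eqref{equ:consistency} makes both the penalty term (since $[\vect{\tau}_h]=0$) and $b_h(\vect{\tau}_h,u-u_h)$ vanish, yielding the orthogonality $(\mathcal{A}(\vect{\sigma}-\vect{\sigma}_h),\vect{\sigma}_h^\star-\Pi_h^c\vect{\sigma})_\Omega=0$, whence $\|\vect{\sigma}-\vect{\sigma}_h\|_{\mathcal{A},\Omega}\le\|\vect{\sigma}-\Pi_h^c\vect{\sigma}\|_{\mathcal{A},\Omega}+\|\vect{\sigma}_h^\star-\vect{\sigma}_h\|_{\mathcal{A},\Omega}$. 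The extra order then comes from interpolating the smooth field $\vect{\sigma}\in H^{k+2}(\Omega;\mathbb{S})$ via \eqref{equ:Pic-2}, together with your (correct) bound on $\|\vect{\sigma}_h^\star-\vect{\sigma}_h\|_{0,\Omega}$ and Theorem \ref{thm:error-estimate}; no regularity of any auxiliary problem is ever invoked. All the ingredients you assembled suffice for this --- they just need to be combined through the error equation with $\vect{\tau}_h=\vect{\sigma}_h^\star-\Pi_h^c\vect{\sigma}$, not through a dual problem.
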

\begin{proof}
By \eqref{equ:weak-form-sum}, \eqref{equ:row-BDM} and Lemma
\ref{lem:tilde-sigma}, we have that for any $v_h \in V_h$,
$$ 
\begin{aligned}
(f, v_h)_{\Omega} &= -(\vect{\sigma}_h, \vect{\varepsilon}_h(v_h))_{\Omega}+\langle
\widehat{\vect{\sigma}}_h n, v_h \rangle_{\partial \mathcal{T}_h}
= -(\vect{\sigma}_h, \vect{\nabla}_hv_h)_\Omega+ \langle
\widehat{\vect{\sigma}}_h n, v_h \rangle_{\partial \mathcal{T}_h} \\
& = -(\widetilde{\vect{\sigma}}_h, \vect{\nabla}_hv_h)_{\Omega} + \langle
\widetilde{\vect{\sigma}}_h n, v_h \rangle_{\partial \mathcal{T}_h} =
({\rm div}\widetilde{\vect{\sigma}}_h, v_h)_{\Omega}.
\end{aligned}
$$ 
By Lemma \ref{lem:symmetrize}, the symmetrized variable
$\vect{\sigma}_h^\star = \widetilde{\vect{\sigma}}_h +
\widetilde{\vect{\tau}}_h$ is piecewise $\mathcal{P}_{k+1}(K;\mathbb{S})$
and belongs to $H({\rm div},\Omega;\mathbb{S})$. Further, the
divergence-free of $\widetilde{\vect{\tau}}_h$ implies that 
\begin{equation}\label{equ:star-equ}
({\rm div}\vect{\sigma}_h^\star, v_h)_{\Omega} = (f, v_h)_{\Omega}.
\end{equation}
In \cite{hu2014family, hu2015family}, Hu and Zhang constructed the
conforming $\mathcal{P}_{k+1}-\mathcal{P}_k^{-1}$ mixed methods for linear
elasticity on simplicial grids when $k\geq d$. 
Hu also show that (cf. \cite[Remark 3.1]{hu2015finite}), when $k \geq
d$, there exists a projection $\Pi_h^c$ such that, 
\begin{subequations}
\begin{align}
({\rm div}(\vect{\tau} - \Pi_h^c\vect{\tau}), v_h)_{\Omega} &= 0 
~~~\qquad \qquad \qquad \forall
\vect{\tau}\in H^1(\Omega;\mathbb{S}), \label{equ:Pic-1} \\
\|\vect{\tau} - \Pi_h^c \vect{\tau}\|_{0,\Omega} &\lesssim h^{k+2}
|\vect{\tau}|_{k+2,\Omega} \qquad \forall \vect{\tau}\in
H^{k+2}(\Omega;\mathbb{S}). \label{equ:Pic-2} 
\end{align}
\end{subequations} 
By \eqref{equ:star-equ} and \eqref{equ:Pic-1}, we have 
$$ 
({\rm div}(\vect{\sigma}_h^\star - \Pi_h^c\vect{\sigma}), v_h) = 0
 \qquad \forall v_h \in V_h. 
$$ 
Taking $\vect{\tau}_h = \vect{\sigma}_h^\star - \Pi_h^c \vect{\sigma}$
in the error equation \eqref{equ:consistency}, we immediately have the
$\mathcal{A}$-orthogonality condition:
\begin{equation} \label{equ:orth} 
(\mathcal{A}(\vect{\sigma} - \vect{\sigma}_h), \vect{\sigma}_h^\star -
\Pi_h^c\vect{\sigma}) = 0.
\end{equation} 
Hence, by the energy estimate \eqref{equ:error-estimate},
\eqref{equ:row-estimate} and \eqref{equ:Pic-2},
$$ 
\begin{aligned}
\|\vect{\sigma} - \vect{\sigma}_h\|_{\mathcal{A},\Omega} &\leq
\|\vect{\sigma} - \Pi_h^{c}\vect{\sigma}\|_{\mathcal{A},\Omega} +
\|\vect{\sigma}_h^\star - \vect{\sigma}_h\|_{\mathcal{A},\Omega} \\ 
&\lesssim \|\vect{\sigma} - \Pi_h^{c}\vect{\sigma}\|_{0,\Omega} +
\|\widetilde{\vect{\tau}}_h\|_{0,\Omega} + \|\widetilde{\vect{\sigma}}_h -
\vect{\sigma}_h\|_{0,\Omega} \\ 
&\lesssim \|\vect{\sigma} - \Pi_h^{c}\vect{\sigma}\|_{0,\Omega} +
h^{1/2}\|(\widehat{\vect{\sigma}}_h - \vect{\sigma}_h)n\|_{\partial
\mathcal{T}_h} \\ 
&\lesssim \|\vect{\sigma} - \Pi_h^{c}\vect{\sigma}\|_{0,\Omega} +
h^{1/2}\|[\vect{\sigma}_h]\|_{\mathcal{E}_h^i} \\ 
&\lesssim \|\vect{\sigma} - \Pi_h^{c}\vect{\sigma}\|_{0,\Omega} + h
\|\vect{\sigma} - \vect{\sigma}_h\|_{*,\Omega} \\
&\lesssim h^{k+2}(|\vect{\sigma}|_{k+2,\Omega} + |u|_{k+1,\Omega}).
\end{aligned}
$$ 
This completes the proof.
\end{proof}
\begin{remark}
In the 2D case, the Scott-Vogelius elements
$\mathcal{P}_{k+2}-\mathcal{P}_{k+1}^{-1}$ are stable when $k \geq 2$
and the grid does not contain singular vertices (cf.
\cite{scott1985norm, guzman2019scott}).  Hence, in the 2D case, we have the
optimal $L^2$ estimate when $k \geq 2$ with some mild constrain
pertaining to the grids. 
\end{remark}

\section{Numerical examples} \label{sec:numerical}
In this section, we present some numerical results of the mixed LDG
method for linear elasticity problem.  The compliance tensor is given
by
$$
\mathcal{A}\vect{\sigma} = \frac{1}{2\mu}
\left(\vect{\sigma} - \frac{\lambda}{2\mu + d\lambda}
    \mathrm{tr}(\vect{\sigma})\vect{I}_d \right),
$$
where $\vect{I}_d$ is the $d\times d$ identity matrix. In the
computation, the Lam\'{e} constants are set to be $\mu = 1/2$ and
$\lambda = 1$. The parameter in \eqref{equ:blinear_a} is chosen
as $\eta_e = 1$ on all $e\in \mathcal E_h^i$. 

\paragraph{2D example.} The 2D problem is computed on the unit square
$\Omega = (0,1)^2$ with a homogeneous boundary condition that $u = 0$
on $\partial \Omega$. Let the exact solution be 
$$
u = 
\begin{pmatrix}
\mathrm{e}^{x-y} xy(1-x)(1-y) \\
\sin(\pi x)\sin(\pi y)
\end{pmatrix}.
$$
The exact stress function $\vect{\sigma}$ and the load function $f$
can be analytically derived from \eqref{equ:elasticity} and for a
given $u$.  Uniform grids with different grid sizes are adopted in
the computation.

\begin{table}[!htbp] 
\centering
\begin{subtable}{\textwidth} 
\centering
\caption{Linear elasticity: $\mathcal{P}_1^{-1} -
  \mathcal{P}_0^{-1}$, 2D uniform grids}
\begin{tabular}{c|cc|cc|cc}
\hline
$1/h$	& $\|u -u_h\|_{0,\Omega}$	&$h^n$	&$\|\vect{\sigma} -
\vect{\sigma}_h\|_{0,\Omega}$ & $h^n$
&$\|{\rm div}_h(\vect{\sigma} - \vect{\sigma}_h)\|_{0,\Omega}$	&$h^n$ \\ \hline
 4 & 0.135877 & ---  & 0.445892 & ---  & 3.839803 & ---  \\ 
 8 & 0.067302 & 1.01 & 0.177473 & 1.33 & 1.936584 & 0.99 \\ 
16 & 0.033543 & 1.00 & 0.080752 & 1.14 & 0.970346 & 1.00 \\ 
32 & 0.016757 & 1.00 & 0.039257 & 1.04 & 0.485431 & 1.00 \\ \hline 
\end{tabular}
\label{tab:elasticity-P1-P0-uniform}
\end{subtable}

\bigskip

\begin{subtable}{\textwidth}
\centering
\caption{Linear elasticity: $\mathcal{P}_2^{-1} - \mathcal{P}_1^{-1}$,
2D uniform grids}
\begin{tabular}{c|cc|cc|cc}
  \hline
$1/h$	& $\|u -u_h\|_{0,\Omega}$	&$h^n$	&$\|\vect{\sigma} -
\vect{\sigma}_h\|_{0,\Omega}$ & $h^n$
&$\|{\rm div}_h(\vect{\sigma} - \vect{\sigma}_h)\|_{0,\Omega}$	&$h^n$ \\ \hline
4  & 0.0198206 & ---  & 0.0425699 & ---  & 0.5850957 & ---  \\ 
8  & 0.0050264 & 1.98 & 0.0079777 & 2.42 & 0.1483264 & 1.98 \\ 
16 & 0.0012616 & 1.99 & 0.0017692 & 2.17 & 0.0372321 & 1.99 \\ 
32 & 0.0003158 & 2.00 & 0.0004284 & 2.05 & 0.0093191 & 2.00 \\ \hline
\end{tabular}
\label{tab:elasticity-P2-P1-uniform}
\end{subtable}

\bigskip

\begin{subtable}{\textwidth}
\centering 
\caption{Linear elasticity: $\mathcal{P}_3^{-1} - \mathcal{P}_2^{-1}$,
2D uniform grids}
\begin{tabular}{c|cc|cc|cc}
  \hline
$1/h$	& $\|u -u_h\|_{0,\Omega}$	&$h^n$	&$\|\vect{\sigma} -
\vect{\sigma}_h\|_{0,\Omega}$ & $h^n$
&$\|{\rm div}_h(\vect{\sigma} - \vect{\sigma}_h)\|_{0,\Omega}$	&$h^n$ \\ \hline
4  & 0.00217252 & ---  & 0.00341919 & ---  & 0.06370927 & ---  \\ 
8  & 0.00027548 & 2.98 & 0.00024533 & 3.80 & 0.00805005 & 2.98 \\ 
16 & 0.00003456 & 2.99 & 0.00001627 & 3.91 & 0.00100892 & 3.00 \\ 
32 & 0.00000432 & 3.00 & 0.00000104 & 3.96 & 0.00012620 & 3.00 \\ \hline
\end{tabular}
\label{tab:elasticity-P3-P2-uniform}
\end{subtable}
\caption{Linear elasticity: the convergence order for 2D example}
\label{tab:elasticity-uniform}
\end{table}

We list the errors and the rates of convergence of the computed
solution in Table \ref{tab:elasticity-uniform}.  The $(k+1)$-th order
convergence is observed for both the $L_2$ error of $u$ and the
$H_h({\rm div})$ error of $\vect{\sigma}$, which is in agreement with
Theorem \ref{thm:error-estimate}. Further, we see from Table
\ref{tab:elasticity-P3-P2-uniform} that $\|\vect{\sigma} -
\vect{\sigma}_h\|_{0,\Omega} = \mathcal{O}(h^4)$ when $k = 2$. This
convergence rate coincides with the statements in Theorem
\ref{thm:L2-stress}, which is also shown {\it sharp} from the $L^2$
errors of stress in Table
\ref{tab:elasticity-P1-P0-uniform}-\ref{tab:elasticity-P2-P1-uniform}. 

\paragraph{3D example.} Let the exact solution on the unit
cube be 
$$ 
u = 
\begin{pmatrix}
2^4 \\ 2^5 \\ 2^6
\end{pmatrix}
x(1-x)y(1-y)z(1-z).
$$ 
Again, the true stress function $\vect{\sigma}$ and the load function
$f$ are defined by the relations in \eqref{equ:elasticity}, for the
given solution $u$. In Table \ref{tab:elasticity-3D}, the errors and
the convergence order in various norms are listed when $k=0,1$. The
optimal orders of convergence are achieved respectively under the
$H_h({\rm div})$ norm for the stress and $L^2$ norm for the
displacement, which confirms Theorem \ref{thm:error-estimate}.

\begin{table}[!htbp] 
\centering
\begin{subtable}{\textwidth} 
\centering
\caption{Linear elasticity: $\mathcal{P}_1^{-1} -
  \mathcal{P}_0^{-1}$, 3D uniform grids}
\begin{tabular}{c|cc|cc|cc}
\hline
$1/h$	& $\|u -u_h\|_{0,\Omega}$	&$h^n$	&$\|\vect{\sigma} -
\vect{\sigma}_h\|_{0,\Omega}$ & $h^n$
&$\|{\rm div}_h(\vect{\sigma} - \vect{\sigma}_h)\|_{0,\Omega}$	&$h^n$ \\ \hline
 2 & 0.235741 & ---  & 1.221265 & ---  & 7.534218 & ---  \\ 
 4 & 0.127481 & 0.89 & 0.536012 & 1.19 & 4.420875 & 0.77 \\ 
 8 & 0.063704 & 1.00 & 0.210303 & 1.35 & 2.294909 & 0.95 \\ \hline 
\end{tabular}
\label{tab:elasticity-3D0}
\end{subtable}

\bigskip

\begin{subtable}{\textwidth}
\centering
\caption{Linear elasticity: $\mathcal{P}_2^{-1} - \mathcal{P}_1^{-1}$,
3D uniform grids}
\begin{tabular}{c|cc|cc|cc}
  \hline
$1/h$	& $\|u -u_h\|_{0,\Omega}$	&$h^n$	&$\|\vect{\sigma} -
\vect{\sigma}_h\|_{0,\Omega}$ & $h^n$
&$\|{\rm div}_h(\vect{\sigma} - \vect{\sigma}_h)\|_{0,\Omega}$	&$h^n$ \\ \hline
 2 & 0.0831048 & ---  & 0.3641751 & ---  & 2.8564400 & ---  \\ 
 4 & 0.0227446 & 1.87 & 0.0664638 & 2.45 & 0.7833919 & 1.87 \\ 
 8 & 0.0058207 & 1.97 & 0.0123827 & 2.42 & 0.2007023 & 1.96 \\ \hline
\end{tabular}
\label{tab:elasticity-3D1}
\end{subtable}
\caption{Linear elasticity: the convergence order for 3D example}
\label{tab:elasticity-3D}
\end{table}

\section{Concluding remarks} \label{sec:concluding}
In this paper, we present the first a priori error analysis of mixed
DG method for solving the linear elasticity problem.  We provide numerical
evidence indicating the sharpness of our estimates, namely, the
convergence order of $k+1$ both stress in $H_h({\rm div})$-norm and
displacement in $L^2$-norm with the elements pair $(\vect{\sigma}_h,
u_h)\in \vect{\Sigma}_h^{k+1}\times V_h^k$. The estimate holds for any   
$k\geq 0$ in arbitrary dimension, making the MDG more meaningful for
the linear elasticity as the lower order conforming
$\mathcal{P}_{k+1}$-$\mathcal{P}_k^{-1}$ elasticity element does
not exist on general simplicial grids \cite{wu2017interior}.  Since
there is a close connection between elasticity elements and Stokes
elements, we also prove the optimal $L^2$ error estimate for the
stress provided that the $\mathcal{P}_{k+2}-\mathcal{P}_{k+1}^{-1}$
Stokes pair is stable and $k \geq d$.

\bibliographystyle{plain}
\bibliography{mixedDG}

\end{document}